\documentclass[a4paper]{article}

\usepackage{amsthm,amsfonts,amssymb,amsmath,amsxtra}
\usepackage[all]{xy}
\usepackage{xr-hyper}
\usepackage[colorlinks=true]{hyperref}
\usepackage{mathrsfs}

\theoremstyle{plain}
\newtheorem{thm}{Theorem}[section]
\newtheorem{lem}[thm]{Lemma}
\newtheorem{cor}[thm]{Corollary}
\newtheorem{prop}[thm]{Proposition}

\theoremstyle{definition}
\newtheorem{defn}[thm]{Definition}

\theoremstyle{remark}
\newtheorem{remark}[thm]{Remark}

    \newcommand{\FH}{{\mathbf{H}}} \newcommand{\FG}{{\mathbf{G}}}
    \newcommand{\FT}{{\mathbf{T}}}
    \newcommand{\FU}{{\mathbf{U}}}
    \newcommand{\FP}{{\mathbf{P}}}
    \renewcommand{\FT}{{\mathbf{T}}}\newcommand{\FM}{{\mathbf{M}}}
    \newcommand{\FA}{{\mathbf{A}}}
    \newcommand{\FZ}{{\mathbf{Z}}}
    
    \newcommand{\FX}{{\mathbf{X}}}

    \newcommand{\sP}{{\mathscr{P}}}
    \newcommand{\sH}{{\mathscr{H}}}
    \newcommand{\sC}{{\mathscr{C}}}

    \newcommand{\BC}{{\mathbb {C}}} 
     
    \newcommand{\BG}{{\mathbb {G}}}

     \newcommand{\BN}{{\mathbb {N}}}
     
     \newcommand{\BR}{{\mathbb {R}}}

     \newcommand{\BZ}{{\mathbb {Z}}}

     \newcommand{\CL}{{\mathcal {L}}}
     \newcommand{\CN}{{\mathcal {N}}}

    \newcommand{\fa}{{\mathfrak{a}}}

    \newcommand{\fo}{{\mathfrak{o}}}

    \newcommand{\sL}{{\mathscr{L}}}

    \renewcommand{\d}{{\mathrm{d}}}

    \newcommand{\Gal}{{\mathrm{Gal}}} 
    \newcommand{\GL}{{\mathrm{GL}}}
    \newcommand{\Hom}{{\mathrm{Hom}}}

    \newcommand{\Ind}{{\mathrm{Ind}}} \newcommand{\ind}{{\mathrm{ind}}}

     \newcommand{\N}{{\mathrm{N}}}

    \newcommand{\ob}{{\mathrm{ob}}}

    \newcommand{\Res}{{\mathrm{Res}}}

    \newcommand{\SO}{{\mathrm{SO}}}\newcommand{\Sp}{{\mathrm{Sp}}}

    \newcommand{\U}{{\mathrm{U}}}
    
    \newcommand{\vol}{{\mathrm{vol}}}

    \newcommand{\pair}[1]{\langle {#1} \rangle}

    \newcommand{\incl}{\hookrightarrow}
    \newcommand{\sk}{\medskip}
    \newcommand{\lra}{\longrightarrow}
    \newcommand{\ra}{\rightarrow} 
    
    \newcommand{\bs}{\backslash}

    \newcommand{\s}{\sk\noindent}

    \newcommand{\abs}[1]{\lvert#1\rvert}


\title{Local periods for discrete series representations}
\author{Chong Zhang}
\begin{document}
\date{}
\maketitle

\begin{abstract}
Let $(G,H)$ be a symmetric pair over a $p$-adic field and $\pi$ a
discrete series representation of $G$. In this paper, for some type
of symmetric pairs $(G,H)$, we show that local periods in
$\Hom_H(\pi,\BC)$ can be constructed by integrating the matrix
coefficients of $\pi$ over $H$.
\end{abstract}

\section{Introduction}

Let $F$ be a non-archimedean local field of characteristic 0. Let
$\FG$ be a connected reductive group over $F$ and $\FH$ a unimodular
spherical subgroup of $\FG$, which means that $\FX=\FH\bs\FG$ is a
spherical variety. Write $G=\FG(F)$ and $H=\FH(F)$.

Let $\pi$ be a unitary irreducible admissible representation of $G$
and $V_\pi$ the space of $\pi$. We say that $\pi$ is
$H$-distinguished if the space $\Hom_H(\pi,\BC)$ is nonzero. We call
elements of $\Hom_H(\pi,\BC)$ local periods. If $\pi$ is
$H$-distinguished, how to explicitly construct nonzero local periods
is an important question. This is part of the local theory of
automorphic periods.

A natural way to construct local periods is to consider the
integration of the matrix coefficients of $\pi$ over $H$. More
precisely, let $\FZ$ be the split component of the center of $\FG$
and $\FZ_\FH=\FZ\cap\FH$. Write $Z_H=\FZ_\FH(F)$. Note that if $\pi$
is $H$-distinguished then the restriction of the central character
of $\pi$ to $Z_H$ is trivial. Fix a $G$-invariant hermitian inner
product $\pair{\cdot,\cdot}$ on $V_\pi$. We formally define a
pairing $\sL$ on $V_{\pi}\times V_{\pi}$ by
\begin{equation}\label{equ. definition of L}
\sL(v,u)=\int_{H/Z_H}\pair{\pi(h)v,u}\ \d h. \end{equation} Note
that if $\sL$ is well defined then it is bi-$H$-invariant and the
map $\sL_u$ given by
\begin{equation}\label{equ. definition of Lu}
\sL_u:v\mapsto\sL(v,u),\quad v\in V_\pi
\end{equation}
belongs to $\Hom_H(\pi,\BC)$. If $\sL$ is well defined, we denote by
$$\sH(\pi)=\{\sL_u\}_{u\in V_{\pi}}$$ the subspace of
$\Hom_H(\pi,\BC)$. Then two natural questions arise:
\begin{enumerate}
\item whether $\sL$ is well defined;
\item if $\sL$ is well defined,  whether we have
$\sH(\pi)=\Hom_H(\pi,\BC)$.
\end{enumerate}

In this paper, we restrict ourselves to the following situations:
\begin{itemize}
\item either $\FX$ is a {\em symmetric space} and $\pi$ is a discrete series
representation;
\item or $\pi$ is supercuspidal and $\FX$ is a symmetric space or
{\em wavefront} spherical variety.
\end{itemize}
We always assume that $\FG$ is split when we require $\FX$ to be a
wavefront spherical variety. We refer to \cite[\S2.1]{sv} for the
definition of wavefront spherical varieties.

When $\pi$ is supercuspidal, $\sL$ is always well defined. The
following notions are natural when we consider discrete series
representations.

\begin{defn}\label{defn. H-integrable}
A discrete series representation $\pi$ of $G$ is called {\em
$H$-integrable} if all its matrix coefficients lie in $L^1(H/Z_H)$,
i.e. $\sL$ is well defined.
\end{defn}

\begin{defn}\label{defn. H-strongly discrete}
A symmetric space $\FX=\FH\bs\FG$ is called {\em strongly discrete}
if all discrete series representations of $G$ are $H$-integrable.
\end{defn}

However, in this paper, we need the following notion of very
strongly discreteness which is stronger than the notion of strongly
discreteness.

\begin{defn}\label{defn. very strong}
A symmetric space is called {\em very strongly discrete} if the
linear form $$\CL:\sC(G/Z)\lra\BC,\quad f\mapsto\int_{H/Z_H}f(h)\ \d
h$$ is well defined and continuous, where $\sC(G/Z)$ is the
Schwartz-Harish-Chandra space of $G/Z$.
\end{defn}

Our main theorems are:

\begin{thm}\label{thm. exhaust}
Suppose that $\FX$ is a very strongly discrete symmetric space and
$\pi$ is a discrete series representation. Then
$\sH(\pi)=\Hom_H(\pi,\BC)$.
\end{thm}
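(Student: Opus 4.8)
The inclusion $\sH(\pi)\subseteq\Hom_H(\pi,\BC)$ is automatic once $\sL$ is well defined, so the content is the reverse inclusion, and I would prove it by showing that any nonzero $\ell\in\Hom_H(\pi,\BC)$ is of the form $\sL_u$ for a suitable $u\in V_\pi$. The first step is to record that a very strongly discrete symmetric space is in particular strongly discrete, so $\pi$ is $H$-integrable and $\sL$ is well defined; thus $\sH(\pi)$ is a genuine subspace of $\Hom_H(\pi,\BC)$. The key analytic input is that the matrix coefficients $h\mapsto\pair{\pi(h)v,u}$, after descending to $H/Z_H$, lie in the image of the Schwartz-Harish-Chandra space $\sC(G/Z)$ under restriction; this is essentially the definition of very strong discreteness combined with the square-integrability (mod center) of discrete series, applied via Cauchy-Schwarz to bound a matrix coefficient by a product of two $L^2$ functions whose restrictions to $H/Z_H$ are then controlled by $\CL$.

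The heart of the argument is a duality/density statement: the pairing
\begin{equation}\label{equ. pairing plan}
\Hom_H(\pi,\BC)\times\Hom_H(\bar\pi,\BC)\lra\BC
\end{equation}
obtained by evaluating on matrix coefficients — or, more concretely, the observation that $\ell\in\Hom_H(\pi,\BC)$ is detected by its values $\ell(\pi(h)v)$ and hence by the period integral $\int_{H/Z_H}\ell(\pi(h)v)\overline{\ell'(\cdots)}$ — lets one reduce to showing that the map $u\mapsto\sL_u$ has dense, hence (by finiteness) full, image. Here I would invoke the multiplicity-one-type finiteness: for symmetric pairs $\dim_\BC\Hom_H(\pi,\BC)<\infty$, so it suffices to separate points. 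Concretely: suppose $\ell\in\Hom_H(\pi,\BC)$ is orthogonal, under the natural pairing, to every $\sL_u$; I want to conclude $\ell=0$. Unwinding, $\ell\perp\sL_u$ for all $u$ means $\int_{H/Z_H}\pair{\pi(h)v,u}\,\ell(\text{something})\,\d h=0$; choosing $u$ to run over a dense set and using that $\ell$ is $H$-invariant, one massages this into the vanishing of $\ell$ against a spanning set of $V_\pi$, forcing $\ell=0$. The role of \emph{continuity} of $\CL$ on $\sC(G/Z)$ is exactly to justify interchanging the $H$-integral with the functionals and to make sense of these manipulations; the role of $\pi$ being discrete series (not merely tempered) is that its coefficients, mod center, lie in $\sC(G/Z)$ in the first place.

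The main obstacle I anticipate is the density/spanning step: showing that $\{\sL_u : u\in V_\pi\}$ already fills up the finite-dimensional space $\Hom_H(\pi,\BC)$ rather than a proper subspace. The clean way to handle this is to produce, for each nonzero $\ell$, an explicit $u$ (built from $\ell$ and a smooth vector, e.g. by an averaging/integration-against-$\ell$ construction reminiscent of how one builds the canonical intertwiner) with $\sL_u=c\,\ell$ for some $c\neq 0$; verifying $c\neq0$ amounts to a positivity statement, which should follow from the definiteness of the inner product $\pair{\cdot,\cdot}$ together with the convergence guaranteed by $H$-integrability. A secondary technical point is that $\sL$ is only defined on $V_\pi\times V_\pi$ a priori; extending the relevant integrals to smooth vectors and commuting them past $\ell$ is where the continuity of $\CL$, and standard estimates on Harish-Chandra's $\Xi$-function relative to $H$, will be used, and I would cite the strongly-discrete machinery (and \cite{sv} for the structure of $\FX$) rather than redo those estimates.
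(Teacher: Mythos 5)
Your overall strategy --- reduce, via Delorme's finiteness of $\dim\Hom_H(\pi,\BC)$, to showing that the orthogonal complement of $\sH(\pi)$ under a suitable pairing vanishes --- is the same as the paper's, and you correctly locate where very strong discreteness and the discrete-series hypothesis must enter. But the two steps you leave vague are exactly the ones that carry the proof, and as sketched they do not go through. First, the analytic input is not merely that $\abs{\phi}\in\sC(G/Z)$ and $\CL(\abs{\phi})<\infty$; that only gives $H$-integrability, i.e.\ that $\sL$ is defined. What the argument actually needs is the absolute convergence of the \emph{cross} integral $\int_G\varphi_{\ell,v_0}(g)\,\phi(g)\ \d g$ of a generalized matrix coefficient against an ordinary one. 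Cauchy--Schwarz does not produce this, because $\varphi_{\ell,v_0}$ lives on $H\bs G$ while $\phi$ lives on $G$: one must unfold over $H$ and establish the uniform estimate
$$\int_H\Xi(ha)\bigl(1+\sigma(ha)\bigr)^{-d}\ \d h\prec\Xi(a)$$
on $A^+_{P,\theta}$ (the paper's Lemma \ref{lem. asymp of integrals}, which uses the openness of $H\bar{P}$ and the functional equation (\ref{equ. harish fcn equation})), and then combine it with the relative Cartan decomposition and the fact that $\varphi_{\ell,v_0}\in\sC(H\bs G)$ (Kato--Takano relative square-integrability plus Delorme--Harinck). This is a genuinely new estimate, not citable ``strongly-discrete machinery,'' and it is the only place the hypothesis of \emph{very} strong discreteness is used.

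Second, your mechanism for the non-degeneracy step is off. Producing, for a given nonzero $\ell$, a vector $u$ with $\sL_u=c\,\ell$ and $c\neq0$ is essentially the conclusion of the theorem and cannot be the method; and ``definiteness of the inner product'' cannot show $(\sL_{v_0},\ell)\neq0$, since this is a cross term, not a norm. The paper's device is a double evaluation of $I(v_0,\phi_0)=\int_G\phi_0(g)\overline{\ell(\pi(g)v_0)}\ \d g$ with $\phi_0(g)=\pair{v_0,\pi(g)v_0}$ and $v_0$ chosen so that $\ell(v_0)\neq0$: by Fubini (legitimate thanks to the convergence above) it equals $(\sL_{v_0},\ell)\,\pair{v_0,v_0}$ for the inner product on $\Hom_H(\pi,\BC)$ built from the $L^2(H\bs G)$-pairing of generalized matrix coefficients and $\dim\Hom_G(\pi\otimes\bar\pi,\BC)=1$; by exhausting $G$ by compact open sets and pulling $\ell$ out of the resulting finite sums it equals $\overline{\ell(\pi(\bar\phi_0)v_0)}$, which Schur orthogonality evaluates as a nonzero multiple of $\overline{\ell(v_0)}\neq0$. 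Without this Schur-orthogonality computation (or an equivalent substitute) your ``separation of points'' step has no engine, so the proposal as written has a genuine gap at its central point.
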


\begin{thm}\label{thm. exhaust supercuspidal}
Suppose that $\FX$ is a symmetric space or a wavefront spherical
variety and $\pi$ is a supercuspidal representation. Then
$\sH(\pi)=\Hom_H(\pi,\BC)$.
\end{thm}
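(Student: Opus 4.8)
The plan is to derive the equality from two facts: that every $H$-invariant functional on $\pi$ is in fact a smooth $H$-fixed vector in the contragredient $\wt\pi$, and that $\sL$ is an honest integral; the supercuspidality of $\pi$ enters only through the second.

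I may assume that $\pi$ is $H$-distinguished, since otherwise $\sH(\pi)\subseteq\Hom_H(\pi,\BC)=0$; then the central character $\omega$ of $\pi$ is trivial on $Z_H$. Let $\wt V_\pi$ be the space of $\wt\pi$ and $\wt V_\pi^{\,H}$ its subspace of $H$-fixed vectors. I claim $\Hom_H(\pi,\BC)=\wt V_\pi^{\,H}$. One inclusion is clear. For the other, note first that $\Hom_H(\pi,\BC)$ is finite-dimensional: this is the multiplicity finiteness theorem for the spherical variety $\FX$. Next, Frobenius reciprocity gives $\Hom_H(\pi,\BC)\cong\Hom_G\bigl(\pi,C^\infty(H\bs G)\bigr)$; realizing $C^\infty(H\bs G)$ as the smooth dual of $\ind_H^G\BC=C_c^\infty(H\bs G)$ and using Frobenius for $\ind$ — both steps being legitimate because $H$ and $G$ are unimodular — one gets $\Hom_G\bigl(\pi,C^\infty(H\bs G)\bigr)\cong\Hom_G\bigl(C_c^\infty(H\bs G),\wt\pi\bigr)\cong\wt V_\pi^{\,H}$. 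Since the two finite-dimensional spaces $\Hom_H(\pi,\BC)$ and $\wt V_\pi^{\,H}$ have the same dimension and one contains the other, they are equal.

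Now fix $P\in\Hom_H(\pi,\BC)=\wt V_\pi^{\,H}$. Because $\pi$ is unitary, the inner product $\pair{\cdot,\cdot}$ gives a conjugate-linear $G$-isomorphism $V_\pi\to\wt V_\pi$, $u\mapsto\pair{\cdot,u}$; choose $u\in V_\pi$ with $\pair{\cdot,u}=P$. Since $\pi$ is supercuspidal, $\sL$ is well defined, so $\sL_u$ is a genuine linear form on $V_\pi$, and for all $v\in V_\pi$
\[
\sL_u(v)=\int_{H/Z_H}\pair{\pi(h)v,u}\,\d h=\int_{H/Z_H}P\bigl(\pi(h)v\bigr)\,\d h=P(v)\int_{H/Z_H}\d h=\vol(H/Z_H)\,P(v),
\]
using that $P$ is $H$-invariant as a functional on $V_\pi$. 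Convergence of the integral forces $\vol(H/Z_H)<\infty$; replacing $u$ by $\vol(H/Z_H)^{-1}u$ then gives $\sL_u=P$, so $P\in\sH(\pi)$. As $P$ was arbitrary and the inclusion $\sH(\pi)\subseteq\Hom_H(\pi,\BC)$ is immediate (it is recorded after the definition of $\sL_u$ in the text), $\sH(\pi)=\Hom_H(\pi,\BC)$.

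The main obstacle is the finite-dimensionality of $\Hom_H(\pi,\BC)$ invoked in the first step: for a symmetric space this is classical, whereas for a wavefront spherical variety it is a deeper result that must be quoted. The remaining verifications are routine: that the Frobenius identifications are compatible with the central character $\omega$, and that ``$\sL$ is well defined'' is to be understood as genuine convergence of the defining integral — which is exactly what licenses the conclusion $\vol(H/Z_H)<\infty$ above. Note that, unlike in Theorem~\ref{thm. exhaust}, no regularization of the period and no appeal to the Schwartz--Harish-Chandra space is needed in the supercuspidal case.
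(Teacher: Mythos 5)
Your proof breaks at its foundation: the identification $\Hom_H(\pi,\BC)=\wt V_\pi^{\,H}$ is false, and everything after it collapses. Via the unitary structure, a smooth $H$-fixed vector of $\wt\pi$ corresponds to an $H$-fixed vector $u\in V_\pi$; for such a $u$ the matrix coefficient $h\mapsto\pair{\pi(h)u,u}=\pair{u,u}$ is constant on $H$, contradicting the compact support modulo $Z$ of matrix coefficients of a supercuspidal representation whenever $H$ is noncompact modulo $Z_H$ (which is the typical situation, e.g.\ for Galois pairs). Hence $\wt V_\pi^{\,H}=0$ in general while $\Hom_H(\pi,\BC)$ can be nonzero: $H$-invariant functionals are almost never smooth vectors of the contragredient. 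The broken link in your chain is the last one, $\Hom_G\bigl(C_c^\infty(H\bs G),\wt\pi\bigr)\cong\wt V_\pi^{\,H}$: compact induction $\ind_H^G$ from a closed but non-open subgroup is \emph{not} left adjoint to restriction, so there is no Frobenius reciprocity of that shape (if there were, your argument would show that Whittaker functionals, symmetric-space periods, etc.\ are always smooth vectors, which is absurd). Your own final computation exposes the problem: you arrive at $\sL_u=\vol(H/Z_H)\,P$ and then ``deduce'' $\vol(H/Z_H)<\infty$, but this volume is infinite for noncompact $H/Z_H$; the manipulation is vacuous because $P$ is not of the form $\pair{\cdot,u}$ with $u\in V_\pi$ in the first place.

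The intended argument is the one of \S4 run in an easier setting, and it genuinely uses more than ``$\sL$ converges.'' Since $\pi$ is supercuspidal it is \emph{relatively} supercuspidal (Lemma \ref{lem. supercuspidal}), so every generalized matrix coefficient $\varphi_{\ell,v}$ is compactly supported modulo $ZH$; this makes the pairing $\pair{v,v'}_{\ell_1,\ell_2}=\int_{H\bs G}\varphi_{\ell_1,v}\,\overline{\varphi_{\ell_2,v'}}\,\d g$ convergent and yields the hermitian inner product (\ref{equ. inner product}) on the finite-dimensional space $\Hom_H(\pi,\BC)$. One then shows $\sH(\pi)^\bot=0$: given $0\neq\ell\in\sH(\pi)^\bot$, choose $v_0$ with $\ell(v_0)\neq0$, set $\phi_0(g)=\pair{v_0,\pi(g)v_0}$, and evaluate $I(v_0,\phi_0)=\int_G\phi_0(g)\overline{\varphi_{\ell,v_0}(g)}\,\d g$ (absolutely convergent here because $\varphi_{\ell,v_0}$ has compact support mod $ZH$ and the inner $H$-integral of $|\phi_0|$ converges, the supercuspidal analogue of Lemma \ref{lem. main lemma}) in two ways: it equals $\pair{v_0,v_0}_{\sL_{v_0},\ell}=(\sL_{v_0},\ell)\pair{v_0,v_0}$, and by moving $\ell$ outside a limit of finite sums it equals $\overline{\ell(\pi(\bar\phi_0)v_0)}=\overline{\lambda\,\ell(v_0)}\neq0$ by Schur orthogonality. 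This contradiction forces $\sH(\pi)^\bot=0$, i.e.\ $\sH(\pi)=\Hom_H(\pi,\BC)$. Any correct proof has to engage with the non-smoothness of $\ell$ in this way; your route cannot be repaired by adjusting the Frobenius bookkeeping.
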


\begin{remark}\label{rem. importance of thm}
Theorem \ref{thm. exhaust} shows that any local period in
$\Hom_H(\pi,\BC)$ is given by integrating some matrix coefficients
over $H$, which is an analog of the global automorphic period. This
theorem can be used to study the factorization of the global periods
into local ones. We refer to \cite[\S17]{sv} for a further
discussion on the link between the global and local theory.
\end{remark}

\begin{remark}\label{rem. exhaust symmetric space}
When $\FX$ is a symmetric space and $\pi$ is of the form
$\ind_J^G\kappa$ for some open compact subgroup $J$ of $G$ and some
irreducible smooth representation $\kappa$ of $J$, results of the
above kind have been obtained by Hakim, Mao and Murnaghan. See
\cite[\S8]{mu} for a survey. Our method is different from theirs.
\end{remark}

\begin{remark}\label{rem. exhaust strongly tempered}
When $\FX$ is a strongly tempered spherical variety (cf.
\cite[\S6.2]{sv} for the definition) and $\pi$ is a tempered
representation, the pairing $\sL$ is well defined by the definition.
In this case, Sakellaridis and Venkatesh \cite[Theorem 6.4.1]{sv}
showed that $\Hom_H(\pi,\BC)$ is nonzero if and only if $\sL$ is
nonzero, by using the Plancherel decomposition of $L^2(X)$ with
$X=H\bs G$. When $(\FG,\FH)=(\SO_n\times\SO_{n+1},\SO_n)$ is in the
setting of Gan-Gross-Prasad conjecture for special orthogonal
groups, Ichino and Ikeda \cite[Proposition 1.1]{ii} showed that
$\FX$ is strongly tempered; Waldspurger \cite[Th\'eor\`em 1]{wa3}
showed that $\Hom_H(\pi,\BC)\leq1$ (basing on the method of
\cite{agrs}) and \cite[Proposition 5.6]{wa2} also proved that
$\Hom_H(\pi,\BC)$ is nonzero if and only if $\sL$ is nonzero by a
different method from that of \cite{sv}. Thus, in this case,
$\sH(\pi)=\Hom_H(\pi,\BC)$. When
$(\FG,\FH)=(\U_n\times\U_{n+1},\U_n)$ is in the setting of
Gan-Gross-Prasad conjecture for unitary groups, analogous result was
proved by Beuzart-Plessis (\cite[Theorem 14.3.1]{bp12} and
\cite[Theorem 8.4.1]{bp15}) for both archimedean and non-archimedean
cases.
\end{remark}

\begin{remark}In general, $\sL$ is not well defined. For example, if
$(\FG,\FH)=(\GL_{2n},\Sp_{2n})$, the matrix coefficients of a
discrete series representation may not belong to $L^1(H/Z_H)$.
\end{remark}

\begin{remark}\label{rem. Gurevich-Offen}
In an earlier version of this paper, we only considered the case
when $\pi$ is supercuspidal. At the same time, we began to study the
case when $\pi$ is discrete series. However, at that time, we could
only show some special cases of symmetric spaces are very strongly
discrete and did not prove Theorem \ref{thm. exhaust}. Recently, we
realize how to prove Theorem \ref{thm. exhaust} when $\FX$ is very
strongly discrete. At the same time, Gurevich-Offen \cite{go} give a
criterion for strongly discreteness (cf. \cite[Theorem 4.4]{go}) and
also a sufficient condition for strongly discreteness (cf.
\cite[Corollary 5.4]{go}). We can show that the condition in
\cite[Corollary 5.4]{go} is sufficient and necessary for very
strongly discreteness.
\end{remark}

As a consequence, under the same assumption in Theorem \ref{thm.
exhaust} or Theorem \ref{thm. exhaust supercuspidal}, we have the
following expression (Corollary \ref{cor. spherical character }) for
the spherical character $\Phi_{\pi,\ell}$ associated to
$\ell\in\Hom_H(\pi,\BC)$. Recall that, for $\ell\in\Hom_H(\pi,\BC)$,
the spherical character $\Phi_{\pi,\ell}$ is defined to be the
distribution on $G$ given by
$$\Phi_{\pi,\ell}(f):=\sum_{v\in\ob(\pi)}\ell(\pi(f)v)\overline{\ell(v)},
\quad f\in C_c^\infty(G),$$ where $\ob(\pi)$ is an orthonormal basis
of $V_\pi$. By Theorem \ref{thm. exhaust} or Theorem \ref{thm.
exhaust supercuspidal}, there exists $v_0\in V_\pi$ such that
$\ell=\sL_{v_0}$. The corollary below is analogous to \cite[Theorem
6.1]{mu} and \cite[Lemma A.3]{iz}. For the proof, see that of
\cite[Lemma A.3]{iz}.

\begin{cor}\label{cor. spherical character }
For all $f\in C_c^\infty(G)$, we have
$$\Phi_{\pi,\ell}(f)=\int_{H/H\cap Z}\int_{H/H\cap Z}\left(\int_G
f(g)\phi(h_2gh_1)\ \d g\right)\ \d h_1\ \d h_2,$$ where
$$\phi(g)=\pair{\pi(g)v_0,v_0},\quad g\in G.$$
\end{cor}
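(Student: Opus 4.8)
The plan is to prove the identity by unwinding both occurrences of $\ell$ via the main theorems, collapsing the orthonormal‑basis sum with the Parseval identity, and repackaging the result as the asserted double integral over $H/(H\cap Z)$; the only delicate point is an interchange‑of‑integration argument, which is where very strong discreteness (or, for supercuspidal $\pi$, compact support of matrix coefficients) enters. First I would invoke Theorem~\ref{thm. exhaust} or Theorem~\ref{thm. exhaust supercuspidal} to fix $v_0\in V_\pi$ with $\ell=\sL_{v_0}$, so that $\phi(g)=\pair{\pi(g)v_0,v_0}$ is as in the statement. Using the unitarity of $\pi$ (so $\pi(h)^*=\pi(h^{-1})$), for $v\in V_\pi$ this gives
\begin{align*}
\ell(\pi(f)v)&=\int_{H/Z_H}\pair{\pi(h_2)\pi(f)v,v_0}\ \d h_2
=\int_{H/Z_H}\pair{v,\pi(f)^*\pi(h_2^{-1})v_0}\ \d h_2,\\
\overline{\ell(v)}&=\int_{H/Z_H}\overline{\pair{\pi(h_1)v,v_0}}\ \d h_1
=\int_{H/Z_H}\pair{\pi(h_1^{-1})v_0,v}\ \d h_1.
\end{align*}

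Next I would substitute these into $\Phi_{\pi,\ell}(f)=\sum_{v\in\ob(\pi)}\ell(\pi(f)v)\overline{\ell(v)}$. Fixing $f$ and choosing a compact open subgroup $K$ with $f$ right $K$‑invariant, together with an orthonormal basis $\ob(\pi)$ adapted to the $K$‑isotypic decomposition, one has $\pi(f)v=0$ whenever $v$ lies in a non‑trivial $K$‑isotypic component; hence only the finitely many $K$‑fixed basis vectors contribute and the sum is finite. One may therefore pull the (finite) sum inside the two $H$‑integrals and apply the Parseval identity $\sum_{v\in\ob(\pi)}\pair{v,a}\pair{b,v}=\pair{b,a}$ with $a=\pi(f)^*\pi(h_2^{-1})v_0$ and $b=\pi(h_1^{-1})v_0$, which collapses the inner sum to
$$\pair{\pi(h_1^{-1})v_0,\,\pi(f)^*\pi(h_2^{-1})v_0}
=\pair{\pi(h_2)\pi(f)\pi(h_1^{-1})v_0,v_0}
=\int_G f(g)\,\phi(h_2gh_1^{-1})\ \d g.$$
Substituting $h_1\mapsto h_1^{-1}$ (measure‑preserving since $H$, hence $H/Z_H$, is unimodular) turns $\phi(h_2gh_1^{-1})$ into $\phi(h_2gh_1)$, and identifying $Z_H=\FZ_\FH(F)=H\cap Z$ yields the stated formula.

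The main obstacle is justifying the interchange of the iterated integrals $\int_{H/Z_H}\int_{H/Z_H}\int_G$, i.e.\ the absolute convergence of $\int_{H/Z_H}\int_{H/Z_H}\int_G|f(g)|\,|\phi(h_2gh_1)|\ \d g\ \d h_1\ \d h_2$. When $\pi$ is supercuspidal this is immediate, since the matrix coefficients of $\pi$ lie in $C_c^\infty(G/Z)$, so the inner $G$‑integral is an $H\times H$‑average of a function of compact support modulo $Z$. When $\pi$ is a discrete series and $\FX$ is very strongly discrete, one uses that $\phi$ and all of its two‑sided translates lie in the Schwartz--Harish-Chandra space $\sC(G/Z)$, together with the fact that very strong discreteness provides a continuous extension of $f\mapsto\int_{H/Z_H}f(h)\ \d h$ to $\sC(G/Z)$, to obtain the required majorization of the iterated integral; this is precisely the argument in the proof of \cite[Lemma~A.3]{iz}, which I would follow for the details.
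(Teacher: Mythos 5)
Your proposal is correct and follows essentially the same route as the paper, which does not write out a proof but defers to that of \cite[Lemma A.3]{iz}: write $\ell=\sL_{v_0}$, unwind both factors as $H/Z_H$-integrals, use admissibility to reduce the orthonormal-basis sum to the finitely many $K$-fixed vectors, collapse it by Parseval, and justify convergence via the well-definedness of $\sL$ (very strong discreteness, resp.\ compact support of matrix coefficients). The computations (unitarity manipulations, the substitution $h_1\mapsto h_1^{-1}$ using unimodularity of $H$) all check out.
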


\begin{remark}\label{rem. on cor}
Combined with other inputs, Corollary \ref{cor. spherical character
} can be used to study the supports of spherical characters, which
has potential applications in simple relative trace formula. For
example, see \cite[Theorem A.2]{iz}, \cite[Proposition 4.5]{fmw} and
\cite[Theorem 1.2]{zha}.

\end{remark}

The rest of the paper is organized as follows. In \S2, some basic
notions and properties that will be used in the paper are recalled.
In \S3, we study the very strongly discrete symmetric spaces. Some
aspects of Gurevich-Offen's work will be briefly reviewed there. In
order to prove Theorem \ref{thm. exhaust supercuspidal}, we also
recall a property of supercuspidal representation (see Lemma
\ref{lem. supercuspidal}). In \S4, we prove Theorem \ref{thm.
exhaust}. Our proof is motivated by those of \cite[Proposition
5.6]{wa2} and \cite[Theorem 8.4.1]{bp15}. The proof of Theorem
\ref{thm. exhaust supercuspidal} is the same but much more easier.
We leave it to the reader.

\paragraph{Acknowledgements.} The author would like to thank Maxim Gurevich
and Omer Offen for very helpful discussion on their work \cite{go}
and to Rapha\"{e}l Beuzart-Plessis for kindly sending a preliminary
version of \cite{bp15} before it is available online. He also thanks
Wen-Wei Li and Yiannis Sakellaridis for useful communications.

\section{Notations and preliminaries}
\paragraph{Estimates} Let $S$ be a set. If $f_1$ and $f_2$ are
positive functions on $S$, we write $f_1\prec f_2$ if there exists
$c>0$ such that $f_1(s)\leq cf_2(s)$ for all $s\in S$; we write
$f_1\asymp f_2$ if both $f_1\prec f_2$ and $f_2\prec f_1$.

\paragraph{Fields}
Let $F$ be a non-archimedean local field of characteristic 0. Denote
by $\abs{\cdot}_F$ the normalized absolute value of $F$, and by
$\fo_F$ the ring of integers of $F$.

\paragraph{Groups}
All the algebraic groups mentioned in the paper are defined over
$F$. We use boldface letter to denote an algebraic group, and use
the corresponding nonbold letter to denote the associated group of
$F$-rational points. For an algebraic variety $\FX$ over $F$, $X=
\FX(F)$ is equipped with the natural topology induced from $F$,
which is a locally compact totally disconnected topological space.

For a connected reductive group $\FG$, we denote by $\fa_G$ the real
vector space $\Hom_\BZ(X^*(\FG),\BR)$ where
$X^*(\FG)=\Hom_F(\FG,\BG_m)$ is the group of rational characters of
$\FG$. We have the Harish-Chandra map $H_G:G\ra\fa_G$ defined by
$$\pair{\chi,H_G(g)}=\log(\abs{\chi(g)}_F),\quad g\in G,\ \chi\in
X^*(\FG).$$ Let $\FA_0$ be a maximal split torus of $\FG$ and
$\FP_0=\FM_0\FU_0$ a minimal parabolic subgroup such that
$\FA_0\subset\FM_0$ where $\FU_0$ is the unipotent radical of
$\FP_0$ and $\FM_0$ is the Levi subgroup. There is a canonical
identification $$\fa_0:=\fa_{A_0}\simeq\fa_{M_0}.$$ Denote
$H_0=H_{M_0}$ for short. Let $\Delta(A_0,P_0)$ be the set of simple
roots of $\FA_0$ in the Lie algebra of $\FP_0$. Set
$$M_0^+=\left\{m\in M_0;\ \pair{\alpha,H_0(m)}\geq0,\forall\
\alpha\in\Delta(A_0,P_0)\right\}.$$ Let $K$ be an $A_0$-good maximal
compact subgroup of $G$ and $M_0^1$ the kernel of $H_{M_0}$. Then we
have a Cartan decomposition $$G=\bigsqcup_{m\in M_0^+/M^1_0}KmK.$$
Recall that (cf. \cite[\S I.1, (5)]{wa1})
\begin{equation}\label{equ. volume 1}
\vol(KmK)\asymp\delta_0^{-1}(m)
\end{equation}
as functions on $M_0^+$, where $\delta_0=\delta_{P_0}$ is the
modular character of $P_0$. What we actually use is the following
Cartan decomposition (cf. \cite[Theorem V 3.21]{ren}). Let
$A_0^+=A_0\cap M_0^+$ and $A_0^1=A_0\cap M_0^1$, then there exists a
finite subset $F_0$ of $M_0^+$ such that
\begin{equation}\label{equ. cartan decomposition 2}
G=\bigsqcup_{a\in A_0^+/A^1_0}\bigsqcup_{c\in F_0}KacK.
\end{equation}

\paragraph{Some functions on $G$} Let $\FG$ be a connected reductive
group and keep the notations as before. Fix an algebraic embedding
$\tau:\FG\ra\GL_n$ over $F$. We can and do assume that
$K\subset\GL_n(\fo_F)$. Then a norm function $\|\cdot\|$ on $G$ is
given by
$$\|g\|:=\sup_{i,j}\sup\left(\abs{\tau(g)_{ij}}_F,
\abs{\tau(g^{-1})_{ij}}_F\right).$$ Set $\sigma(g)=\log\|g\|$ to be
the log-norm function on $G$. We refer to \cite[\S I.1]{wa1} for the
properties of $\sigma$. Especially, fixing a $W^G$-invariant norm
$\abs{\cdot}$ on $\fa_0$, where $W^G$ is the Weyl group of $A_0$ in
$G$, we have
$$1+\sigma(m)\asymp 1+\abs{H_0(m)}$$ as functions on $M_0$. Also,
for a compact subset $\omega$ of $G$, we have
$$\sup_{\gamma_1,\gamma_2\in\omega}1+\sigma(\gamma_1g\gamma_2)\asymp
1+\sigma(g)$$ as functions on $G$. For an algebraic (affine) variety
$\FX$ over $F$, there is a general notion of norms on $X$. We refer
to \cite[\S18]{kot} for the precise definitions and some important
properties of norms.

Let $\Xi(g)$ be the Harish-Chandra function of $G$ given by
$\Xi(g)=\pair{\pi(g)v_0,v_0}$ where $v_0$ is the unique
$K$-invariant element of $\Ind_{P_0}^G{\bf1}$ such that $v_0(1)=1$.
Recall that $\Xi$ is positive real-valued, bi-$K$-invariant and
there exists $d\in\BN$ such that
\begin{equation}\label{equ. harish fcn
1}\delta_0^{1/2}(m)\prec\Xi(m) \prec
\delta_0^{1/2}(m)(1+\sigma(m))^d\end{equation} as functions on
$M_0^+$ (cf. \cite[Lemme II.1.1]{wa1}). Also recall that for any
$g_1,g_2\in G$ we have (cf. \cite[Lemme II.1.3]{wa1})
\begin{equation}\label{equ. harish fcn equation}
\int_K\Xi(g_1kg_2)\ \d k=\Xi(g_1)\Xi(g_2).
\end{equation}
For a compact subset $\omega$ of $G$, we have (cf. \cite[Lemma
4.2.3]{sil})
\begin{equation}\label{equ. asymp on compact set}
\sup_{\gamma_1,\gamma_2\in\omega}\Xi(\gamma_1g\gamma_2)\asymp\Xi(g)\end{equation}
as functions on $G$.

We denote by $\sC(G)$ the Schwartz-Harish-Chandra space of $G$,
which is the space of bi-$J$-invariant continuous functions $f$ on
$G$ for some open compact subgroup $J$ of $G$ such that for each
$r\in\BR$
\begin{equation}\label{equ. schwartz-harish-chandra}
\abs{f(g)}\prec\Xi(g)(1+\sigma(g))^{-r}
\end{equation}
as functions on $G$. The space $\sC(G)$ is a locally convex and
compact topological vector space. We refer to \cite[\S II.1]{wa1}
for the precise description of the topology on $\sC(G)$.

\paragraph{Symmetric spaces}
Let $\FG$ be a connected reductive group and $\theta$ a rational
involution of $\FG$ defined over $F$. Let $\FG^\theta$ be the group
of the fixed points of $\theta$ and $(\FG^\theta)^0$ be the
connected component of $\FG^\theta$ containing the identity. Let
$\FH$ be the subgroup of $\FG$ such that
$(\FG^\theta)^0\subset\FH\subset\FG^\theta$. Then $(\FG,\FH)$ is
called a symmetric pair and the geometric quotient $\FX=\FH\bs\FG$
is called a symmetric space.

A split torus $\FA$ of $\FG$ is called $\theta$-split if
$\theta(a)=a^{-1}$ for any $a\in\FA$. A parabolic subgroup $\FP$ of
$\FG$ is called a $\theta$-parabolic subgroup if $\FP$ and
$\theta(\FP)$ are opposite parabolic subgroups. In such a case, we
always take $\FM=\FP\cap\theta(\FP)$ for a Levi subgroup of $\FP$,
which is $\theta$-stable. It is known that $HP$ is open in $G$ when
$\FP$ is a $\theta$-parabolic subgroup. Let $\FP=\FM\FU$ be a
$\theta$-parabolic of $\FG$ and $\FA_{\FP,\theta}$ the maximal
$\theta$-split torus of the center of $\FM$. Denote by
$\Delta(A_{P,\theta},P)$ the set of simple roots of
$\FA_{\FP,\theta}$ in the Lie algebra of $\FP$ and set
$$A^+_{P,\theta}=\left\{a\in A_{P,\theta};\ \abs{\alpha(a)}_F\leq1\
\forall\ \alpha\in\Delta(A_{P,\theta},P)\right\}.$$ We have the
relative Cartan decomposition for symmetric spaces (cf.
\cite[Theorem 1.1]{bo}): there exists a compact subset $\Omega$ of
$G$ and a finite set $\sP$ of minimal $\theta$-parabolic subgroups
of $\FG$ such that
\begin{equation}\label{equ. relative cartan decomposition}
G=\bigcup_{\FP\in\sP}HA^+_{P,\theta}\Omega.
\end{equation}

\paragraph{Some functions on $H\bs G$} Let $(\FG,\FH)$ be a
symmetric pair. We recall some functions on $H\bs G$ introduced in
\cite[\S3]{lag} and their basic properties. Consider the symmetric
map $s:H\bs G\ra G$ given by $s(g)=\theta(g^{-1})g$. The following
functions on $H\bs G$ are all defined by the pullback of some
functions on $G$ via $s$. Set
$$\Theta:=(s^*\Xi)^{\frac{1}{2}},\quad \N_d:=(1+s^*\sigma)^d$$ for
$d\in\BZ$, that is,
$$\Theta(Hg)=\Xi(s(g))^{\frac{1}{2}},\quad
\N_d(Hg)=(1+\sigma(s(g)))^d$$ for $Hg\in H\bs G$. We denote
$\N=\N_1$ for short.

Let $\FP=\FM\FU$ be a minimal $\theta$-parabolic subgroup of $\FG$
and $\omega$ a compact subset of $G$. Choose a norm on $\abs{\cdot}$
on $\fa_M$. Then, as functions on $A_{P,\theta}$, we have (cf.
\cite[Lemma 7]{lag})
\begin{equation}\label{equ. norm functions on symmetric space 1}
\sup_{\gamma\in\omega}\N(Ha\gamma)\asymp1+\abs{H_M(a)}.
\end{equation}
Also, as functions on $A_{P,\theta}$, there exists $d,d'\in\BN$ such
that
\begin{equation}\label{equ. harish-chandra fcn on sym space}
\delta^{\frac{1}{2}}_P(a)\N_{-d}(Ha)\prec\sup_{\gamma\in\omega}
\Theta(Ha\gamma)\prec \delta_P^{\frac{1}{2}}(a)\N_{d'}(Ha)
\end{equation}
(cf. \cite[Proposition 6]{lag}).

We denote by $\sC(H\bs G)$ the Schwartz-Harish-Chandra space of
$H\bs G$ (introduced in \cite[Definition 4.1]{dh}), which is the
space of right-$J$-invariant functions $f$ on $H\bs G$ for some
compact open subgroup $J$ of $G$ such that for any $d\in\BN$
\begin{equation}\label{equ. schwartz-harish-chandra on sym space}
\abs{f(x)}\prec\Theta(x)\N_{-d}(x)\end{equation} as functions on
$H\bs G$.

\paragraph{Representations}
Let $\FG$ be a connected reductive group and $\FZ$ the split
component of the center of $G$. Let $(\pi,V_\pi)$ be an irreducible
admissible representation. The functions $\phi(g)=\pair{\pi(g)v,u}$
on $G$ for some $v,u\in V_\pi$ are called matrix coefficients of
$\pi$. Recall that $\pi$ is called supercuspidal if its matrix
coefficients are compactly supported modulo $Z$, and $\pi$ is called
discrete series if it is unitary and its matrix coefficients are
square-integrable on $G/Z$. It is known that if $\pi$ is discrete
series then the absolute values of all its matrix coefficients
belong to the Schwartz-Harish-Chandra space $\sC(G/Z)$.

Now let $(\FG,\FH)$ be a symmetric pair. For
$\ell\in\Hom_H(\pi,\BC)$ and $v\in V_\pi$, the generalized matrix
coefficient of $\ell$ associated to $v$ is defined by
$$\varphi_{\ell,v}(g)=\ell(\pi(g)v),\quad g\in G,$$ which is a continuous
function on $H\bs G$.

We say that $\pi$ is relatively supercuspidal if its generalized
matrix coefficients $\varphi_{\ell,v}$ are compactly supported
modulo $ZH$ for all $\ell\in\Hom_H(\pi,\BC)$ and $v\in V_\pi$. It is
known that if $\pi$ is supercuspidal then it is relatively
supercuspidal (cf. \cite[Proposition 8.1]{kt1}).

We say that $\pi$ is a relatively discrete series representation if
$\pi$ is unitary and $$\int_{ZH\bs G}\abs{\varphi_{\ell,v}(g)}^2\ \d
g<\infty$$ for all $\ell\in\Hom_H(\pi,\BC)$ and all $v\in V_\pi$. It
is known that if $\pi$ is a discrete series representation then it
is  relatively discrete series (cf. \cite[Proposition 4.10]{kt2}).
It is also known that if $\pi$ is relatively discrete series then
the absolute values of all its generalized matrix coefficients
belong to the Schwartz-Harish-Chandra space $\sC(ZH\bs G)$ (cf.
\cite[Lemma 4.2]{dh}).

\section{$H$-integrability and very strongly discreteness}

\subsection{Supercuspidal representations}
Now let $\pi$ be a supercuspidal representation of $G$.  As
mentioned in the introduction, in such a case, we allow
$\FX=\FH\bs\FG$ to be a wavefront spherical variety but require that
$\FG$ is split. If this is the case, the definition of relatively
supercuspidal is the same as the case of symmetric space. Since
$\pi$ is supercuspidal, the matrix coefficients of $\pi$ are
compactly supported modulo $\FZ$ and thus belong to $L^1(H/Z_H)$.
Therefore the pairing $\sL$ is well defined. To prove Theorem
\ref{thm. exhaust supercuspidal}, we need the following lemma, which
is more or less well known. We present a proof for completeness.

\begin{lem}\label{lem. supercuspidal}
If $\pi$ is supercuspidal, it is relatively supercuspidal.
\end{lem}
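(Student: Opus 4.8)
The plan is to show directly that every generalized matrix coefficient $\varphi_{\ell,v}$ of a supercuspidal $\pi$ is compactly supported modulo $ZH$. The strategy is to reduce the statement about $\varphi_{\ell,v}$ on $H\bs G$ to the already-known compact support (mod $Z$) of the ordinary matrix coefficients of $\pi$ on $G$. First I would fix $\ell\in\Hom_H(\pi,\BC)$ and $v\in V_\pi$, pick an open compact subgroup $J$ of $G$ fixing $v$, and replace $\ell$ by $e_J\cdot\ell$ (where $e_J$ is the idempotent attached to $J$), so that without loss of generality $\ell$ is $J$-invariant on the right; thus $\varphi_{\ell,v}$ is right-$J$-invariant, and it suffices to bound its support on the discrete set $H\bs G/J$. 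The key point is to express $\ell$ itself in terms of matrix coefficients: since $\pi$ is supercuspidal, for any vector $w$ in the (admissible, hence finite-dimensional after fixing $J$) space $V_\pi^J$, one has the reproducing/projection formula $w = \mathrm{(const)}\int_{G/Z}\pair{w,\pi(g)v_0}\,\pi(g)v_0\,\d g$ for a suitable $v_0$, the integral converging absolutely because matrix coefficients are compactly supported mod $Z$.

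Next I would apply $\ell$ to such an identity written for $w=\pi(g)v$: this yields $\varphi_{\ell,v}(g)=\ell(\pi(g)v)$ as an absolutely convergent integral over $G/Z$ of $\pair{\pi(g)v,\pi(g')v_0}$ against the fixed continuous functional $\ell(\pi(g')v_0)$. Because $\ell$ is $H$-invariant on the left, the function $g'\mapsto\ell(\pi(g')v_0)$ is a function on $H\bs G$, and the key observation is that after unfolding, $\varphi_{\ell,v}(g)$ becomes an integral of a matrix coefficient of $\pi$ (in the $g$-variable, twisted by $g'$) which is supported in a set of the form $\{g : \sigma(s(g))\ \text{bounded}\}$ up to the compact fibers — here one uses the relative Cartan decomposition \eqref{equ. relative cartan decomposition} to organize $H\bs G$ into the pieces $HA^+_{P,\theta}\Omega$, reducing the support question to a question about $a\in A_{P,\theta}$, and the estimate $1+\sigma(s(a\gamma))\asymp 1+|H_M(a)|$ implicit in \eqref{equ. norm functions on symmetric space 1}. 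Then compact support of the ordinary matrix coefficient of $\pi$ modulo $Z$, transported through $s(g)=\theta(g^{-1})g$, forces $|H_M(a)|$ to stay bounded, hence $\varphi_{\ell,v}$ is supported in finitely many of the pieces $HaJ$ with $a\in A^+_{P,\theta}$ bounded, i.e. it is compactly supported modulo $ZH$. For the wavefront spherical (non-symmetric) case, one uses instead the corresponding Cartan decomposition of $X=H\bs G$ from \cite[\S2.1]{sv} and the analogous norm estimates.

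The main obstacle I anticipate is the bookkeeping in the unfolding step: making rigorous the passage from the abstract supercuspidal reproducing formula on $V_\pi^J$ to an honest statement about the support of $\varphi_{\ell,v}$ as a function on $H\bs G/J$, and in particular justifying that the left-$H$-invariance of $\ell$ together with the geometry of $s$ really does pin down the support in terms of the log-norm of $s(g)$ rather than merely of $g$. A cleaner alternative, which I would keep in reserve, is to invoke instead the known structural fact that a supercuspidal $\pi$ is projective and injective in the category of smooth $G$-modules with fixed central character: then $\ell$, being $H$-invariant, extends the evaluation at a $J$-fixed vector, and one can realize $\varphi_{\ell,v}$ as the image of an ordinary (compactly-supported-mod-$Z$) matrix coefficient under the averaging-type map along $H$, whose support is automatically contained in the $H$-saturation of a compact-mod-$Z$ set, i.e. compact mod $ZH$. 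Either route reduces everything to the compact support of ordinary matrix coefficients plus the Cartan decomposition, so the argument is short; the write-up just needs to choose one of the two reductions and carry out the $A_{P,\theta}$-estimate carefully.
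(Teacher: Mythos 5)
Your proposal has a genuine gap at its central step, and it bypasses the mechanism that actually makes the lemma true. In your first route, the reproducing formula $w=c\int_{G/Z}\pair{w,\pi(g')v_0}\,\pi(g')v_0\ \d g'$ applied to $w=\pi(g)v$ only yields
$$\varphi_{\ell,v}(g)=c\int_{G/Z}\pair{v,\pi(x)v_0}\,\varphi_{\ell,v_0}(gx)\ \d x,$$
i.e.\ it rewrites $\varphi_{\ell,v}$ as a compactly supported average of right translates of $\varphi_{\ell,v_0}$; this is the tautology $\varphi_{\ell,v}=\varphi_{\ell,\pi(\check\phi)v_0}$ and carries no information about the support of either function on $H\bs G$. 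Your ``key observation'' that the unfolded integral is supported where $\sigma(s(g))$ is bounded is exactly the statement to be proved, and it does not follow from the compact support (mod $Z$) of ordinary matrix coefficients: $\ell$ is not of the form $\pair{\cdot,u}$, so knowing that $a\mapsto\pair{\pi(a)v,u}$ has compact support says nothing a priori about $a\mapsto\ell(\pi(a)v)$ on $A^+_{P,\theta}$. Your reserve route presupposes that $\ell$ can be realized as an $H$-average of an ordinary matrix coefficient, i.e.\ that $\ell\in\sH(\pi)$; but that is essentially Theorem \ref{thm. exhaust supercuspidal}, which the paper derives with the help of the present lemma, and the independent results of that kind recalled in Remark \ref{rem. exhaust symmetric space} are only known for $\pi$ of the form $\ind_J^G\kappa$. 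The appeal to projectivity/injectivity is not carried out and is, in effect, the hard content.

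What actually drives the lemma is the vanishing of proper Jacquet modules. The paper's proof cites, in the symmetric case, Kato--Takano \cite[Proposition 8.1]{kt1}: the asymptotics of $\varphi_{\ell,v}$ along $A^+_{P,\theta}$ for a proper $\theta$-parabolic $\FP$ are governed by a constant term $\ell_P$ defined on the Jacquet module $\pi_P$, which vanishes because $\pi$ is supercuspidal, so $\varphi_{\ell,v}$ dies off near infinity in every direction of the relative Cartan decomposition. In the wavefront case the paper invokes \cite[Theorem 5.1.2]{sv}: near $\Theta$-infinity, $\varphi_{\ell,v}$ agrees with $\varphi_\Theta(v)$ for some $\varphi_\Theta\in\Hom_G(\pi,C^\infty(X_\Theta))$, and $C^\infty(X_\Theta)$ is parabolically induced from a proper parabolic, forcing $\varphi_\Theta=0$. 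Some input of this Casselman type --- comparison of $\varphi_{\ell,v}$ with a constant term living on a proper Jacquet module or boundary degeneration --- is indispensable, and your proposed reduction to the support of ordinary matrix coefficients cannot replace it.
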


\begin{proof}
When $\FX$ is a symmetric space, it is proved by Kato and Takano
\cite[Proposition 8.1]{kt1}.

When $\FG$ is split and $\FX$ is wavefront, the lemma follows from
\cite[Theorem 5.1.2]{sv} on asympotics of the generalized matrix
coefficients. We briefly explain the reason. Given
$\ell\in\Hom_H(\pi,\BC)$ and $v\in V_\pi$, we simply denote
$f=\varphi_{\ell,v}$. Suppose that $v$ is in $V_\pi^J$ where $J$ is
an open compact subgroup of $G$ and $V_\pi^J$ is the subspace of
$V_\pi$ fixed by $J$. For each $\Theta\subset\Delta_X$ where
$\Delta_X$ is the set of spherical roots associated to $\FX$ (see
\cite[\S2.1]{sv}), there is a boundary $\FG$-spherical variety
$\FX_\Theta$ (see \cite[\S2.4]{sv}). Write $X_\Theta=\FX_\Theta(F)$.
The key fact is that, for $\Theta\subsetneq\Delta_X$,
$C^\infty(X_\Theta)$ as a $G$-representation is parabolically
induced from some $P^-_\Theta$-representation where $P^-_\Theta$ is
the parabolic subgroup of $G$ associated to $\Theta$. By
\cite[Theorem 5.1.2]{sv}, for each $\Theta\subset\Delta_X$, there is
a form $\varphi_\Theta\in\Hom_G(\pi,C^\infty(X_\Theta))$ such that
$f|_{N_\Theta}=\varphi_\Theta(v)|_{N'_\Theta}$ where
$N_\Theta,N'_\Theta$ are some ``$J$-good neighborhoods of
$\Theta$-infinity''. Since $\pi$ is supercuspidal, we have
$\varphi_\Theta(v)=0$. Thus, $f|_{N_\Theta}=0$ for each
$\Theta\subsetneq\Delta_X$. At last, Lemma \ref{lem. supercuspidal}
follows from the property that the set
$$X\bs \bigcup_{\Theta\subsetneq\Delta_X}N_\Theta$$ is compact
modulo $Z$.
\end{proof}

\subsection{Very strongly discrete symmetric spaces}
Let $\FH\bs\FG$ be a symmetric space. Since the absolute values of
matrix coefficients of discrete series representations belong to
$\sC(G/Z)$, the symmetric space $\FH\bs\FG$ is strongly discrete if
it is very strongly discrete. As shown in the proof of \cite[Lemma
1]{cl}, $\FH\bs\FG$ is very strongly discrete if and only if the
following condition is
satisfied:\\
($\star$) there exists a natural number $N$ such that
\begin{equation}\label{equ. convergence of xi }
\int_{H/Z_H}\Xi(g)(1+\sigma_*(g))^{-N}\ \d h<+\infty.
\end{equation}
As mentioned in Remark \ref{rem. Gurevich-Offen}, a sufficient
condition for strongly discreteness is obtained by Gurevich-Offen in
\cite[Corollary 5.4]{go}. We will show that this condition is
sufficient and necessary for very strongly discreteness (see
Proposition \ref{prop. criterion}). Before we explain the reason, we
show some examples to illustrate the idea.

\subsubsection{Galois pairs}
Let $\FH$ be a connected reductive group over $F$ and $E$ a
quadratic separable extension of $F$. Let $\FG=\Res_{E/F}(\FH_E)$ be
the Weil restriction of the base change of $\FH$ to $E$, and
$\theta$ the involution on $\FG$ defined by the nontrivial Galois
conjugation in $\Gal(E/F)$. Then $\FH=\FG^\theta$. We say that
$(\FG,\FH)$ is a Galois pair with respect to $E/F$. The global
theory of automorphic period on Galois pairs were studied in
\cite{lr}.

\begin{prop}\label{prop. galois pair}
Let $(\FG,\FH)$ be a Galois pair. Then the symmetric space
$\FH\bs\FG$ is very strongly discrete.
\end{prop}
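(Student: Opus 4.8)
The plan is to verify the convergence criterion $(\star)$ directly for a Galois pair, using the relative Cartan decomposition \eqref{equ. relative cartan decomposition} together with the estimates \eqref{equ. harish-chandra fcn on sym space} and \eqref{equ. norm functions on symmetric space 1} to reduce the integral $\int_{H/Z_H}\Xi(g)(1+\sigma_*(g))^{-N}\,\d h$ to a sum of integrals over the cones $A^+_{P,\theta}$ for $\FP$ running through a finite set $\sP$ of minimal $\theta$-parabolic subgroups. On each such cone the Harish-Chandra function is controlled by $\delta_P^{1/2}(a)$ up to polynomial factors, so the question becomes whether $\sum_{a\in A^+_{P,\theta}/A^1}\delta_P^{1/2}(a)\,(1+|H_M(a)|)^{-N}$ converges for $N$ large, and this is a purely combinatorial statement about the weights of $\FA_{\FP,\theta}$ on the unipotent radical.

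First I would exploit the special structure of a Galois pair: here $\FG=\Res_{E/F}(\FH_E)$ and $\FH=\FG^\theta$, so a minimal $\theta$-parabolic of $\FG$ is of the form $\Res_{E/F}(\FP_{0,E})$ for a minimal parabolic $\FP_0$ of $\FH$, and the maximal $\theta$-split torus $\FA_{\FP,\theta}$ of the center of $\FM=\Res_{E/F}(\FM_{0,E})$ is the "anti-diagonal" copy of the maximal split torus $\FA_0\subset\FH$ sitting inside $\FA_0(E)\times\FA_0(E)$ via $t\mapsto(t,t^{-1})$ (after identifying $\FG$-points appropriately). Under this identification $\delta_P$ restricted to $\FA_{\FP,\theta}$ is computed from the roots of $\FA_0$ in $\FH$: concretely $\delta_P(a(t))=\delta_{P_0}(t)^2$ or a comparable power, while the cone $A^+_{P,\theta}$ corresponds to the dominant (or antidominant) chamber in $A_0$. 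The point is that $\delta_{P_0}^{1/2}$ decays on the dominant cone of $\FH$ exactly because $\FP_0$ is a genuine parabolic with nontrivial unipotent radical whose roots span, so a polynomially-weighted sum of $\delta_{P_0}(t)$ over lattice points in that cone converges; this is the same mechanism that makes discrete series of $H$ itself square-integrable, now applied one level down.

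Concretely, after the reduction the remaining estimate is of the form
\begin{equation*}
\int_{A^+_{P,\theta}/A^1}\delta_P^{1/2}(a)\,\bigl(1+|H_M(a)|\bigr)^{-N}\,\d a
\prec\sum_{\lambda}\; q^{-\pair{2\rho_{P_0},\lambda}}\,(1+|\lambda|)^{-N}\,q^{\pair{2\rho_{P_0},\lambda}}\cdot(\text{poly}),
\end{equation*}
where $\lambda$ ranges over the relevant cone of cocharacters and the volume factor $\vol(KacK)\asymp\delta_0^{-1}$ from \eqref{equ. volume 1} cancels part of the decay of $\Theta$; one must check that what is left over is still summable, which it is once one keeps track carefully of which $\delta$'s appear (the modular character of $P_0$ in $\FH$ versus that of $P$ in $\FG$). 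I would organize the bookkeeping using \eqref{equ. harish-chandra fcn on sym space}: it gives $\sup_{\gamma\in\Omega}\Theta(Ha\gamma)\prec\delta_P^{1/2}(a)\N_{d'}(Ha)$, so $\Xi(s(g))\prec\delta_P(a)\N_{2d'}$, and then the measure on $H/Z_H$ pushed through the Cartan decomposition contributes another $\delta_P(a)^{-1}$-type factor, leaving a convergent geometric sum against $(1+|H_M(a)|)^{-N}$ for $N\gg0$.

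The main obstacle I anticipate is getting the measure-theoretic bookkeeping exactly right: one must relate the Haar measure on $H/Z_H$ under the relative Cartan decomposition \eqref{equ. relative cartan decomposition} to the measure $\d a$ on $A^+_{P,\theta}/A^1$ times the measure on the compact part $\Omega$, and track the Jacobian factor (a power of $\delta$). A clean way to sidestep delicate estimates is to instead invoke the Gurevich--Offen criterion mentioned in Remark \ref{rem. Gurevich-Offen} and Proposition \ref{prop. criterion} once it is available: one would just check that the Galois-pair datum satisfies the combinatorial condition of \cite[Corollary 5.4]{go}, which for $(\Res_{E/F}\FH_E,\FH)$ amounts to the fact that $\FH$ itself is quasi-split-or-not is irrelevant because the relevant "boundary degenerations" are all proper parabolic inductions with genuine unipotent radicals. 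Either route works; I would present the direct cone estimate as the primary argument since it is self-contained given the estimates already recalled in \S2, and remark that it also follows from the general criterion.
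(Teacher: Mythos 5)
There is a genuine gap at the very first step. The quantity to be controlled in $(\star)$ is an integral over $H/Z_H$ of $\Xi$ restricted to $H$, but you propose to decompose it via the relative Cartan decomposition \eqref{equ. relative cartan decomposition} and the estimates \eqref{equ. harish-chandra fcn on sym space}, \eqref{equ. norm functions on symmetric space 1}. Those tools parametrize and estimate functions on the quotient $H\bs G$, not on $H$: the decomposition $G=\bigcup HA^+_{P,\theta}\Omega$ folds $H$ away, and $\Theta$, $\N_d$ are pullbacks of $\Xi$, $\sigma$ via the symmetrization map $s(g)=\theta(g^{-1})g$, which is identically trivial on $H$. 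Likewise the cone $A^+_{P,\theta}$ sits in a $\theta$-split torus, which meets $H$ only in a finite group (for a Galois pair the $\theta$-split tori are, as you say, the ``anti-diagonal'' ones, whereas $H$ is the ``diagonal''), so a sum over $A^+_{P,\theta}$ cannot capture an integral over $H$. The correct decomposition is the Cartan decomposition \eqref{equ. cartan decomposition 2} of the group $H$ itself, with torus $A^+_{0,H}\subset H$ and volume factor $\vol(K_HacK_H)\asymp\delta_{0,H}^{-1}(a)$ from \eqref{equ. volume 1}; this is what the paper uses.

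Even after correcting the decomposition, two inputs are missing from your bookkeeping. First, the bound $\Xi(m)\prec\delta_0^{1/2}(m)(1+\sigma(m))^d$ of \eqref{equ. harish fcn 1} holds only on the dominant cone of $G$, so to apply it to $a\in A^+_{0,H}$ one must choose the minimal parabolics compatibly (a $\theta$-stable $\FA_0\supset\FA_{0,\FH}$ and $\FP_0\subset\FP_1$ with $\FP_1^\theta=\FP_{0,\FH}$) so that $A^+_{0,H}\subset A^+_0$ as in \eqref{equ. relation for torus}; you never address this compatibility. Second, the exact cancellation $\delta_{0,H}^{-1}(a)\,\delta_0^{1/2}(a)=1$ on $A_{0,H}$, which is what leaves the convergent sum $\sum_a(1+\sigma(a))^{-N}$, comes from the Lapid--Rogawski identities \eqref{equ. relation for modular characters}; your heuristic that $\delta_P(a(t))=\delta_{P_0}(t)^2$ ``or a comparable power'' on the anti-diagonal torus concerns the wrong torus and cannot substitute for it --- the precise exponent is essential, since anything weaker than exact cancellation against the volume factor would leave a divergent geometric growth. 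Your fallback via the criterion of Proposition \ref{prop. criterion} is not actually verified and, in the paper's logical order, that criterion is established after (and by the same computation as) this example. Your final displayed sum does exhibit the correct cancellation, so the underlying intuition is sound, but the argument as written starts from an inapplicable decomposition and omits the two structural facts that make the cancellation legitimate.
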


\begin{proof} The proof is analogous to that of \cite[Proposition
1.1]{ii}. For simplicity, without loss of generality, we assume that
the center of $\FH$ is anisotropic.

Let $\FA_{0,\FH}$ be a maximal split torus of $\FH$ and
$\FP_{0,\FH}$ a minimal parabolic subgroup of $\FH$ containing
$\FA_{0,\FH}$. Then there exists a maximal split torus $\FA_0$ of
$\FG$ which is $\theta$-stable such that $\FA_{0,\FH}\subset\FA_0$
and a $\theta$-stable parabolic subgroup $\FP_1$ of $\FG$ such that
$\FP_{0,\FH}=\FP_1^\theta$. Let $\FP_0$ be a minimal parabolic
subgroup $\FP_0$ such that $\FA_0\subset\FP_0\subset\FP_1$. Let
$A^+_{0,H}$ and $A^+$ be the subsets defined with respect to
$P_{0,H}$ and $P_0$ respectively as in \S2.

As explained in \cite[\S5.1]{go}, we have the relation
\begin{equation}\label{equ. relation for
torus}A^+_{0,H}\subset A^+_0.
\end{equation}
Let $\delta_{0,H},\delta_0$ and $\delta_1$ be the modular characters
of $P_{0,H},P_0$ and $P_1$ respectively. Then we have the relation
(cf. \cite[Lemma 2.5.1]{lr})
\begin{equation}\label{equ. relation for modular characters}
\delta^{\frac{1}{2}}_1|_{P_{0,H}}=\delta_{0,H}\quad
\delta_1|_{A_{0,H}}=\delta_0|_{A_{0,H}}.
\end{equation}

Now let $\Xi$ be the Harish-Chandra function of $G$ defined with
respect to $P_0$ and $A_0$-good maximal compact subgroup $K$. Fix an
$A_{0,H}$-good maximal compact subgroup $K_H$ of $H$. Then by Cartan
decomposition (\ref{equ. cartan decomposition 2}) and relations
(\ref{equ. volume 1}), (\ref{equ. harish fcn 1}) and (\ref{equ.
asymp on compact set}), to show $(\star)$, it suffices to show that
there exists a natural number $N$ such that the following series is
convergent:
\begin{equation}\label{equ. series convergent}\sum_{a\in
A^+_{0,H}/A^1_{0,H}}\delta^{-1}_{0,H}(a)
\delta_0^{\frac{1}{2}}(a)(1+\sigma(a))^{-N}.\end{equation} By the
relation (\ref{equ. relation for modular characters}), such $N$ does
exist.
\end{proof}

\subsubsection{Other cases}
For a general symmetric space $\FH\bs\FG$, it is known that we can
always find such a data: a maximal split torus $\FA_{0,\FH}$ of
$\FH$, a minimal parabolic subgroup $\FP_{0,\FH}\supset\FA_{0,\FH}$,
a $\theta$-stable maximal split torus $\FA_0\supset\FA_{0,\FH}$ of
$\FG$, a $\theta$-stable parabolic subgroup $\FP_1$ of $\FG$ such
that $\FP_{0,\FH}=\FP^\theta_1$, and a minimal parabolic subgroup
$\FP_0$ such that $\FA_0\subset\FP_0\subset\FP_1$.

Suppose that the symmetric space $\FH\bs\FG$ satisfies the following
assumption
\begin{equation}\label{equ. modulus relation 2}
A^+_{0,H}\subset A_0^+,\quad\textrm{and}\quad
\delta_0^{1/2}|_{A^+_{0,H}}\geq\delta_{0,H}|_{A^+_{0,H}}.
\end{equation}
Then, by the same arguments as in the case of Galois pairs, we can
show that $\FH\bs\FG$ is very strongly discrete. For example, the
reader can check that the symmetric pair $(\GL_{2n}(F),\GL_n(E))$
satisfies the assumption (\ref{equ. modulus relation 2}).

\subsubsection{General cases}
In general the condition (\ref{equ. modulus relation 2}) is not
always satisfied. Now we review some part of Gurevich-Offen's work
\cite{go}.

Let $W^H$ be the Weyl group of $A_{0,H}$ in $H$ and $W^{H\bs G}$ the
Weyl group of $A_{0,H}$ in $G$. Then there is a natural embedding
$W^H\subset W^{H\bs G}$. By \cite[Corollary 3.5.(3)]{go}, there is a
particular set of representatives $[W^{H\bs G}/W^H]$ for the coset
$W^{H\bs G}/W^H$. Set $\rho_0^G$ be the usual half sum of positive
roots of $\FA_0$ with respect to $\FP_0$ and $\rho_0^H$ the usual
half sum of positive roots of $\FA_{0,\FH}$ with respect to
$\FP_{0,\FH}$. For $w\in[W^{H\bs G}/W^H]$, set $$\rho_{H\bs
G}^w=\rho_0^G-2w(\rho_0^H),$$ which is viewed as an element in
$\fa_{0,H}^*$ where $\fa_{0,H}^*=X^*(\FA_{0,H})\otimes_\BZ\BR$. Let
$\Delta^{H\bs G}$ be the set of non-zero restrictions to
$\FA_{0,\FH}$ of the simple roots $\Delta(A_0,P_0)$ of $\FA_0$ with
respect to $\FP_0$. We say that $\rho^w_{H\bs G}$ is relativelt
weakly positive if it is a linear combination of the elements of
$\Delta^{H\bs G}$ with non-negative coefficients.

\begin{prop}\label{prop. criterion}
The symmetric space $\FH\bs\FG$ is very strongly discrete if and
only if $\rho^w_{H\bs G}$ is relatively weakly positive for every
$w\in[W^{H\bs G}/W^H]$.
\end{prop}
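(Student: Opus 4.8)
The plan is to reduce the statement, via the criterion $(\star)$ and the relative Cartan decomposition, to a question about convergence of a sum over $A_{0,H}^+/A_{0,H}^1$, and then to use Gurevich--Offen's refined Cartan decomposition for $H\bs G$ together with the Weyl-group combinatorics recalled just above to identify exactly when that sum converges. By the characterization of very strong discreteness recalled earlier (condition $(\star)$), we must show that $\int_{H/Z_H}\Xi(g)(1+\sigma(g))^{-N}\ \d h<+\infty$ for some $N$ \emph{iff} every $\rho^w_{H\bs G}$ is relatively weakly positive. The first step is to replace the integral over $H/Z_H$ by a sum using a Cartan-type decomposition of $H$: write $H=\bigsqcup K_H a c K_H$ with $a\in A_{0,H}^+/A_{0,H}^1$ and $c$ in a finite set, use $\vol(K_HacK_H)\asymp\delta_{0,H}^{-1}(a)$ from \eqref{equ. volume 1} applied to $H$, and use bi-$K$-invariance of $\Xi$ on $G$ together with \eqref{equ. asymp on compact set} to absorb $c$ and $K_H$ (note $K_H$ is compact in $G$). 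Likewise $1+\sigma(a)\asymp 1+\abs{H_{0,H}(a)}$, so convergence of the integral is equivalent to convergence, for some $N$, of
\begin{equation}\label{equ. plan sum}
\sum_{a\in A_{0,H}^+/A_{0,H}^1}\delta_{0,H}^{-1}(a)\,\Xi(a)\,(1+\abs{H_{0,H}(a)})^{-N}.
\end{equation}

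The heart of the argument is to estimate $\Xi(a)$ for $a\in A_{0,H}^+$ as an element of $G$. Unlike in the Galois-pair case, we do \emph{not} have $A_{0,H}^+\subset A_0^+$ in general, so a single inequality from \eqref{equ. harish fcn 1} will not suffice; instead one must use the finer structure. The plan is to invoke \cite[Corollary 3.5]{go}: for each $a\in A_{0,H}^+$ one can write, up to the finite ambiguity that gets absorbed as above, $a = k\, w(a)\, k'$-type relations controlled by the representatives $w\in[W^{H\bs G}/W^H]$, so that on each ``chamber'' indexed by $w$ the relevant part of $A_{0,H}^+$ maps into $w^{-1}A_0^+$ (or $w$ conjugates it into $M_0^+$). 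Applying \eqref{equ. harish fcn 1} on that chamber gives $\Xi(a)\asymp \delta_0^{1/2}(w\cdot a)\cdot(\text{polynomial in }\abs{H_{0,H}(a)})^{\pm}$, and by the standard computation $\delta_0^{1/2}$ paired against $H_{0,H}(a)$ contributes the character $\langle \rho_0^G, \cdot\rangle$ while $\delta_{0,H}$ contributes $\langle 2\rho_0^H,\cdot\rangle$ on the $H$-side, twisted by $w$. Collecting the exponents, the summand in \eqref{equ. plan sum} restricted to the $w$-chamber behaves (up to polynomial factors) like $e^{-\langle \rho_{H\bs G}^w,\, H_{0,H}(a)\rangle}$ with $\rho_{H\bs G}^w=\rho_0^G-2w(\rho_0^H)$ as defined above.

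The final step is a lattice-point convergence lemma: a sum $\sum_{a} e^{-\langle \lambda, H_{0,H}(a)\rangle}(1+\abs{H_{0,H}(a)})^{-N}$ over the cone $A_{0,H}^+/A_{0,H}^1$ (whose image under $H_{0,H}$ is a finitely generated sub-semigroup of a rational polyhedral cone cut out by the coroots dual to $\Delta^{H\bs G}$) converges for some $N$ \emph{iff} $\lambda$ is a non-negative linear combination of the elements of $\Delta^{H\bs G}$ --- that is, iff $\lambda$ is relatively weakly positive; the ``only if'' direction comes from restricting the sum to a single ray where $\lambda$ pairs negatively with the generator, along which the terms grow exponentially and no polynomial factor $(1+\abs\cdot)^{-N}$ can compensate. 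Taking $\lambda=\rho_{H\bs G}^w$ and ranging over all chambers $w\in[W^{H\bs G}/W^H]$, the whole sum \eqref{equ. plan sum} converges for some $N$ iff $\rho_{H\bs G}^w$ is relatively weakly positive for every $w$, which is the assertion. The main obstacle I expect is the bookkeeping in the middle step: making \cite[Corollary 3.5]{go} yield precisely the decomposition of $A_{0,H}^+$ into $w$-chambers on which $\Xi$ admits the clean estimate, and tracking that the polynomial error terms from \eqref{equ. harish fcn 1} and \eqref{equ. volume 1} really are harmless (they are, because they only affect the value of $N$, not convergence) --- but the genuine subtlety is checking the inequality on overlaps/boundaries of chambers and making sure no chamber is lost, which is exactly where Gurevich--Offen's choice of representatives $[W^{H\bs G}/W^H]$ does the work.
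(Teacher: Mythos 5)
Your plan is essentially the paper's own proof: reduce condition $(\star)$ via the Cartan decomposition of $H$ and the volume asymptotics \eqref{equ. volume 1} to the convergence of a sum over $A^+_{0,H}/A^1_{0,H}$, use \cite[Corollary 3.5]{go} to partition $A^+_{0,H}$ into chambers $n^{-1}A^+_{e,H}n$ (with $A^+_{e,H}=A^+_{0,H}\cap A^+_0$) indexed by $w\in[W^{H\bs G}/W^H]$ so that \eqref{equ. harish fcn 1} applies on each, and conclude by the equivalence between convergence of the resulting exponential sum over the cone and the relative weak positivity of $\rho^w_{H\bs G}$. The bookkeeping you flag as the remaining work is exactly what the paper also delegates to Gurevich--Offen's combinatorics and to ``the same arguments as in the Galois case,'' so there is no substantive difference.
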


\begin{proof}
Set $A^+_{e,H}=A^+_{0,H}\cap A^+_0$. Let $\CN^{H\bs G}$ be a subset
of $N_G(A_{0,H})$ consisting of a choice of a representative $n$ for
every element $w\in[W^{H\bs G}/W^H]$. By \cite[Corollary 3.5]{go}
there is a partition for $A^+_{0,H}$:
$$A^+_{0,H}=\bigsqcup_{n\in\CN^{G/H}}n^{-1}A^+_{e,H}n.$$
Then, also by the same arguments as in the case of Galois pairs, we
can show that the condition $(\star)$ holds if and only if there
exists $N\in\BN$ such that the following series is convergent for
each $n\in\CN^{H\bs G}$:
$$\sum_{a\in A^+_{e,H}/(A^+_{e,H}\cap A^1_{0,H})}\delta^{-1}_{0,H}(n^{-1}an)
\delta^{\frac{1}{2}}_0(a)\left(1+\sigma(a)\right)^{-N},$$ which is
equivalent to $$\delta_0^{\frac{1}{2}}(\cdot)|_{A^+_{0,H}}\geq
\delta_{0,H}(n^{-1}\cdot n)|_{A^+_{0,H}}$$ for each $n\in\CN^{H\bs
G}$. The last condition is equivalent to $\rho^w_{H\bs G}$ is
relatively weakly positive for each $w\in[W^{H\bs G}/W^H]$.
\end{proof}
\begin{remark}\label{rem. go's examples}
Gurevich-Offen prove that $\FH\bs\FG$ is strongly discrete if
$\rho^w_{H\bs G}$ is relatively weakly positive for every
$w\in[W^{H\bs G}/W^H]$. They use this condition to show some special
cases of symmetric spaces are strongly discrete. Therefore all the
strongly discrete symmetric spaces in \cite[\S5]{go} are very
strongly discrete series.
\end{remark}

\section{Proof of Theorem \ref{thm. exhaust}}
From now on, let $\FH\bs\FG$ be a very strongly discrete symmetric
space. For simplicity but without loss of generality, we assume that
the center of $\FG$ is anisotropic.

\begin{lem}\label{lem. norm} Let $\FA$ be a
$\theta$-split torus of $\FG$. Then as functions on $H\times A$, we have
$$1+\sigma(ha)\asymp1+\sigma(h)+\sigma(a).$$
\end{lem}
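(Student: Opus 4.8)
The inequality $1+\sigma(ha) \prec 1+\sigma(h)+\sigma(a)$ is the easy direction: it follows immediately from the submultiplicativity of the norm $\|\cdot\|$ (hence subadditivity of $\sigma$), namely $\sigma(ha)\leq \sigma(h)+\sigma(a)$, which is one of the standard properties of $\sigma$ recalled from \cite[\S I.1]{wa1}. So the whole content is the reverse inequality $1+\sigma(h)+\sigma(a)\prec 1+\sigma(ha)$, and by subadditivity applied to $h=(ha)a^{-1}$ it is enough to bound $\sigma(a)$ by $1+\sigma(ha)$ uniformly in $h\in H$ and $a\in A$ (then $\sigma(h)\leq \sigma(ha)+\sigma(a^{-1})=\sigma(ha)+\sigma(a)$ finishes it, since $\sigma(a^{-1})=\sigma(a)$).

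The plan is to extract $\sigma(a)$ from the symmetric map $s(g)=\theta(g^{-1})g$ on $H\bs G$. Since $\FA$ is $\theta$-split, $\theta(a)=a^{-1}$, and since $\theta$ fixes $H$ pointwise, we compute $s(ha)=\theta((ha)^{-1})ha = \theta(a^{-1})\theta(h^{-1})ha = a h^{-1} h a = a^2$. Now $\sigma\circ s$ is, up to the usual $O(1)$ comparison, a norm function on the affine variety $H\bs G$ pulled back appropriately, and more concretely $\sigma(s(g))$ is comparable to a norm function evaluated at $s(g)\in G$; in particular $\sigma(a^2)\asymp \sigma(a)$ (standard: $\sigma(a^n)\asymp |n|\,\sigma(a)$ on a fixed torus, or simply $\sigma(a^2)\geq \sigma(a)$ directly from $\|a\|\leq \|a^2\|^{?}$ — more safely, use that on the cocharacter lattice of $\FA$ the function $\sigma$ is comparable to a norm, so $\sigma(a^2)\asymp 2\sigma(a)\asymp\sigma(a)$). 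On the other hand, because $s$ is an algebraic morphism of affine varieties and norms on affine varieties satisfy the functoriality estimate $\|s(g)\| \prec \|g\|^{C}$ for some constant $C$ (see \cite[\S18]{kot}), we get $\sigma(s(g)) \prec 1+\sigma(g)$ as functions on $G$, hence in particular $1+\sigma(a) \asymp 1+\sigma(a^2)=1+\sigma(s(ha)) \prec 1+\sigma(ha)$ for all $h\in H$, $a\in A$. This is exactly the uniform bound $\sigma(a)\prec 1+\sigma(ha)$ we needed.

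Putting the two directions together: $1+\sigma(ha)\leq 1+\sigma(h)+\sigma(a) \prec 1 + (1+\sigma(ha)) + (1+\sigma(ha)) \asymp 1+\sigma(ha)$ on one side, and $1+\sigma(h)+\sigma(a) \leq 1+\big(\sigma(ha)+\sigma(a)\big)+\sigma(a) \prec 1+\sigma(ha)$ on the other, using $\sigma(a)\prec 1+\sigma(ha)$ twice. Hence $1+\sigma(ha)\asymp 1+\sigma(h)+\sigma(a)$ as functions on $H\times A$.

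I expect the main obstacle to be purely bookkeeping: making sure the comparison $\sigma(s(g)) \asymp$ (a norm on $H\bs G$) and the polynomial-growth functoriality of norms under $s$ are cited correctly (the relevant statements are in \cite[\S18]{kot} and the setup in \cite[\S3]{lag}), and checking the elementary fact $\sigma(a^2)\asymp\sigma(a)$ on the fixed torus $\FA$ — the latter is where one uses that $\sigma$ restricted to a torus is comparable to a genuine norm on $\fa_A$, so that doubling the argument only changes it by a bounded factor. No deep input is needed beyond the basic properties of $\sigma$ and of norms on affine varieties already recalled in \S2.
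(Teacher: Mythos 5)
Your proof is correct, but it takes a genuinely different route from the paper's. The paper treats both inequalities at once by observing that the multiplication map $p:\FH\times\FA\to\FG$ factors through the finite quotient by $\FT=\FH\cap\FA$ followed by a closed immersion, hence is a finite morphism, and then invokes \cite[Proposition 18.1]{kot} (pullbacks of norms under finite morphisms are equivalent to norms) to get $1+\sigma(ha)\asymp 1+\sigma(h)+\sigma(a)$ in one stroke. You instead isolate the nontrivial direction and extract $\sigma(a)$ by hand via the symmetrization map: since $\theta$ fixes $H$ pointwise and inverts $\FA$, one has $s(ha)=\theta(a^{-1}h^{-1})ha=a^2$, and combining $\sigma(s(g))\prec 1+\sigma(g)$ (polynomial functoriality of norms under the algebraic map $s$) with $1+\sigma(a^2)\asymp 1+\sigma(a)$ on the torus gives the key uniform bound $\sigma(a)\prec 1+\sigma(ha)$; subadditivity then yields both remaining estimates. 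Your argument is more elementary and self-contained in that it avoids the (not entirely obvious) claims that $\FH\FA$ is closed in $\FG$ and that $p$ is finite, replacing them with the concrete identity $s(ha)=a^2$, which is exactly where the hypotheses that $\FA$ is $\theta$-split and $H\subset G^\theta$ enter; the paper's argument is shorter on the page but leans on a more abstract black box. The only point you should tighten is the comparison $1+\sigma(a^2)\asymp 1+\sigma(a)$: the clean justification is that, after diagonalizing the split torus $\tau(\FA)$ over $F$ (or equivalently using $1+\sigma\asymp 1+\abs{H_0(\cdot)}$ on tori), squaring just doubles the relevant exponents, so the $1+$ is needed and suffices. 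With that, the proof is complete.
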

\begin{proof}
Consider the natural map
$$p:\FH\times\FA\lra\FG,\quad (h,a)\mapsto ha.$$
Let $\FT=\FH\cap\FA$, which is a finite group. Then $p$ is a
composition of the quotient map
$$\tau:\FH\times\FA\ra(\FH\times\FA)/\FT$$ and the closed immersion
$$(\FH\times\FA)/\FT\incl\FG,$$ where the action of $\FT$ on
$\FH\times\FA$ is $t\cdot(h,a)=(ht,t^{-1}a)$. Since $\tau$ is a
finite morphism, the map $p$ is also a finite morphism. Note that
$\sigma(h)+\sigma(a)$ is a log-norm on $H\times A$ and $\sigma(ha)$
is the pullback by $p$ of the log-norm $\sigma$ on $G$. Therefore,
by \cite[Proposition 18.1]{kot}, as functions on $H\times A$,
$1+\sigma(ha)$ and $1+\sigma(h)+\sigma(a)$ are equivalent.
\end{proof}

Now let $\FP$ be a minimal $\theta$-parabolic subgroup of $\FG$.
Denote $\FA=\FA_{\FP,\theta}$ and $A^+=A^+_{P,\theta}$.
\begin{lem}\label{lem. asymp of integrals}
Suppose that there exists $N\in\BN$ such that
$$\int_H\Xi(h)(1+\sigma(h))^{-N}\ \d h$$ is convergent.
Then there exists $d\in\BN$ such that, as functions on $A^+$, we
have
$$\int_{H}\Xi(ha)\left(1+\sigma(ha)\right)^{-d}\ \d h\prec\Xi(a).$$
\end{lem}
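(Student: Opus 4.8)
The plan is to reduce the integral over $H$ to the known convergent integral in the hypothesis by inserting the variable $a\in A^+$ and exploiting the structure of $\Xi$ on $HA$. The key geometric input is the relative Cartan decomposition (\ref{equ. relative cartan decomposition}) together with the behaviour of $\Xi$ under left translation by $a\in A^+$. More precisely, I would first use equation (\ref{equ. harish fcn equation}) to write, for any $g\in G$,
$$\Xi(ha)\leq \int_K\Xi(hak)\ \d k\cdot \text{(const)}^{-1}$$
is not quite right directly; instead the cleanest route is to bound $\Xi(ha)$ from above by a product involving $\Xi(h)$ and a power of $\delta_P^{1/2}(a)$ times a polynomial in $\sigma$, using (\ref{equ. harish fcn 1}) applied on $M_0^+$ after absorbing $a$ into a suitable $M_0^+$-coordinate (note $A^+=A^+_{P,\theta}\subset M_0^+$ up to the compact set $\Omega$ in (\ref{equ. relative cartan decomposition})). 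The point is that left multiplication by $a$ shifts the $M_0$-coordinate of $h$ by a bounded amount relative to $a$, so that $\delta_0^{1/2}(ha)\asymp \delta_0^{1/2}(h)\delta_0^{1/2}(a)$ on the relevant region, giving $\Xi(ha)\prec \Xi(h)\,\delta_P^{1/2}(a)\,(1+\sigma(h)+\sigma(a))^{d_0}$ for some $d_0$.

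Next I would invoke Lemma \ref{lem. norm} to replace $1+\sigma(ha)$ by $1+\sigma(h)+\sigma(a)$ up to constants, so that the integrand
$$\Xi(ha)\bigl(1+\sigma(ha)\bigr)^{-d}$$
is bounded, up to a constant, by
$$\Xi(h)\,\delta_P^{1/2}(a)\,\bigl(1+\sigma(h)+\sigma(a)\bigr)^{d_0-d}.$$
Choosing $d$ large enough (say $d\geq d_0+N$), I can split the polynomial factor as
$$\bigl(1+\sigma(h)+\sigma(a)\bigr)^{d_0-d}\leq \bigl(1+\sigma(h)\bigr)^{-N}\bigl(1+\sigma(a)\bigr)^{d_0-d+N}\leq \bigl(1+\sigma(h)\bigr)^{-N},$$
since $d_0-d+N\leq 0$. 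Then the integral over $H$ is bounded by $\delta_P^{1/2}(a)$ times $\int_H\Xi(h)(1+\sigma(h))^{-N}\ \d h$, which is finite by hypothesis. Finally, comparing with (\ref{equ. harish fcn 1}) (equivalently (\ref{equ. harish-chandra fcn on sym space}) applied on $A^+=A^+_{P,\theta}$), one has $\delta_P^{1/2}(a)\prec \Xi(a)$ on $A^+$, which yields the claimed bound $\int_H\Xi(ha)(1+\sigma(ha))^{-d}\ \d h\prec \Xi(a)$.

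The main obstacle I anticipate is making rigorous the comparison $\delta_0^{1/2}(ha)\asymp \delta_0^{1/2}(h)\delta_0^{1/2}(a)$ and the estimate $\Xi(ha)\prec\Xi(h)\delta_P^{1/2}(a)(1+\sigma(h)+\sigma(a))^{d_0}$: one must pass from $h\in H$ to its $KmK$-coordinate, track how left multiplication by $a$ (which is $\theta$-split, hence interacts with the $\theta$-parabolic $\FP$) moves that coordinate, and control the error by a compact set uniformly in $a$. Here the relative Cartan decomposition (\ref{equ. relative cartan decomposition}) and the asymptotics (\ref{equ. harish-chandra fcn on sym space}) for $\Theta=(s^*\Xi)^{1/2}$ on $A_{P,\theta}$ are the right tools, since $s(ha)=\theta(a^{-1})\theta(h^{-1})ha = a\,\theta(h^{-1})h\,a$ puts the $a$-dependence into a clean conjugation-type position. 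Everything else is a routine bookkeeping of polynomial weights and an application of the hypothesis.
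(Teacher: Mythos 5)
There is a genuine gap, and it sits exactly at the step you defer to the end as ``bookkeeping.'' Your whole reduction rests on the pointwise estimate $\Xi(ha)\prec \Xi(h)\,\delta_P^{1/2}(a)\,(1+\sigma(h)+\sigma(a))^{d_0}$ for $h\in H$, $a\in A^+$, which, since $\delta_P^{1/2}$ and $\Xi$ agree up to polynomial factors on $A^+$ by (\ref{equ. harish fcn 1}), amounts to a polynomial submultiplicativity $\Xi(ha)\prec\Xi(h)\Xi(a)(1+\sigma(h)+\sigma(a))^{d_0}$. This is false: $h$ can partially cancel $a$. Take the group case $\FG=\SL_2\times\SL_2$, $\FH=\SL_2$ embedded diagonally with $\theta(x,y)=(y,x)$, so $\Xi_G(g_1,g_2)=\Xi(g_1)\Xi(g_2)$; with $t=\mathrm{diag}(\varpi^n,\varpi^{-n})$ the element $a=(t,t^{-1})$ lies in $A^+_{P,\theta}$ for a suitable minimal $\theta$-parabolic, and $h=(t^{-1},t^{-1})\in H$ gives $ha=(e,t^{-2})$. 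Then $\Xi_G(ha)\asymp q^{-2n}(1+n)^{d}$ while $\Xi_G(h)\Xi_G(a)\asymp q^{-4n}(1+n)^{d'}$, so the ratio grows like $q^{2n}$ and no polynomial in $\sigma(h)+\sigma(a)\asymp n$ can absorb it. This pair is very strongly discrete (the hypothesis of the lemma holds, since $\Xi^2(1+\sigma)^{-N}$ is integrable for $N$ large), so the counterexample lives squarely inside the lemma's setting. The parts of your argument that are fine are the use of Lemma \ref{lem. norm} to replace $1+\sigma(ha)$ by $1+\sigma(h)+\sigma(a)$, the splitting of the polynomial weight, and the final comparison $\delta_P^{1/2}(a)\prec\Xi(a)$ on $A^+$; what is missing is any valid route from $\Xi(ha)$ to $\Xi(h)\Xi(a)$.

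The paper's proof supplies that route by averaging rather than by a pointwise bound. Since $H\bar P$ is open in $G$, one chooses compact neighborhoods of the identity $C_1\subset H$, $C_2\subset\bar P$ and $C_K\subset K$ with $C_K\subset C_1C_2$; because $a\in A^+$ contracts $\bar P$, the sets $a^{-1}C_2a$ stay inside a fixed compact set, so for $k=c_1c_2\in C_K$ one has $\Xi(hc_1c_2a)=\Xi\bigl((hc_1)\,a\,(a^{-1}c_2a)\bigr)\asymp\Xi(ha)$ uniformly by (\ref{equ. asymp on compact set}). One then averages over $k\in C_K$, enlarges the integration to all of $K$ using the positivity of $\Xi$, and applies the exact identity $\int_K\Xi(hka)\,\d k=\Xi(h)\Xi(a)$ of (\ref{equ. harish fcn equation}); the hypothesis on $\int_H\Xi(h)(1+\sigma(h))^{-N}\,\d h$ then finishes the proof. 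That insertion of a compact piece of $K$ between $h$ and $a$, made possible by the openness of $H\bar P$ and the contraction property of $A^+$, is the idea your proposal lacks, and it cannot be replaced by the submultiplicativity you assume.
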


\begin{proof}
By Lemma \ref{lem. norm}, we have for any $d\in\BN$
$$\left(1+\sigma(ha)\right)^{-d}\prec\left(1+\sigma(h)+\sigma(a)\right)^{-d}
\prec\left(1+\sigma(h)\right)^{-d}.$$ Let $\bar{P}$ be the opposite
of $P$. Let $C_1\subset H,C_2\subset\bar{P}$ be some compact
neighborhoods of the identity. Since $H\bar{P}$ is open in $G$,
there exists a compact neighborhood of the identity $C_K\subset K$
such that $C_K\subset C_1C_2$. Note that there is a compact set $C$
such that $a^{-1}C_2a\subset C$ for any $a\in A^+$. Therefore there
exists $d\in\BN$ big enough such that for any $k=c_1c_2\in C_K$ with
$c_1\in C_1,c_2\in C_2$, we have
$$\begin{aligned}
\int_H\Xi(ha)\left(1+\sigma(ha)\right)^{-d}\  \d h
&\prec
\int_H\Xi(ha)\left(1+\sigma(h)\right)^{-d}\ \d h\\
&\prec
\int_H\Xi(hc_1a\cdot
a^{-1}c_2a)\left(1+\sigma(hc_1)\right)^{-N}\ \d h\\
&=\int_H\Xi(hc_1c_2a)\left(1+\sigma(h)\right)^{-N}\ \d
h.\end{aligned}$$ Therefore, by (\ref{equ. harish fcn equation}), we
have
$$\begin{aligned}
\int_H\Xi(ha)\left(1+\sigma(ha)\right)^{-d}\ \d h &\prec
\int_H\int_{C_K}\Xi(hka)\left(1+\sigma(h)\right)^{-N}\ \d k\ \d h\\
&\prec\int_H\int_{K}\Xi(hka)\left(1+\sigma(h)\right)^{-N}\ \d k\ \d h\\
&=\Xi(a)\cdot\left(\int_H\Xi(h)\left(1+\sigma(h)\right)^{-N}\ \d
h\right),
\end{aligned}$$
which completes the proof.
\end{proof}

\begin{lem}\label{lem. main lemma}
Let $\pi$ be a discrete series representation of $G$. Then
for any generalized matrix coefficient $\varphi$ and matrix
coefficient $\phi$ of $\pi$, the integral $$\int_G\varphi(g)\phi(g)\ \d g$$
is absolutely convergent.
\end{lem}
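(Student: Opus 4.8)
The plan is to reduce the global convergence of $\int_G \varphi(g)\phi(g)\,\d g$ to the pointwise estimates on the Harish-Chandra functions already collected in \S2, using the relative Cartan decomposition \eqref{equ. relative cartan decomposition}. First I would recall the two inputs: since $\pi$ is discrete series, $\phi$ lies (up to absolute value) in $\sC(G/Z) = \sC(G)$, so $\abs{\phi(g)} \prec \Xi(g)(1+\sigma(g))^{-r}$ for every $r$; and since a discrete series is relatively discrete series, by \cite[Lemma 4.2]{dh} the generalized matrix coefficient satisfies $\abs{\varphi(Hg)} \prec \Theta(Hg)\N_{-d}(Hg)$ for every $d$, where $\Theta = (s^*\Xi)^{1/2}$ and $\N_{-d} = (1+s^*\sigma)^{-d}$. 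So it suffices to bound $\int_G \Theta(Hg)\N_{-d}(Hg)\,\Xi(g)(1+\sigma(g))^{-r}\,\d g$.

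Next I would unfold this integral using \eqref{equ. relative cartan decomposition}: $G = \bigcup_{\FP\in\sP} H A^+_{P,\theta}\Omega$ with $\Omega$ compact and $\sP$ finite, so it is enough to treat one piece $\int_{H}\int_{A^+_{P,\theta}} (\cdots)$ with the $\Omega$-variable absorbed into the $\asymp$-estimates \eqref{equ. asymp on compact set} for $\Xi$, \eqref{equ. norm functions on symmetric space 1}--\eqref{equ. harish-chandra fcn on sym space} for $\Theta,\N$, and the analogous log-norm estimate on $G$; one also needs the Jacobian/volume factor for the $H\times A^+_{P,\theta}\to G$ coordinates, which is comparable to $\delta_P^{-1}(a)$ (this is the standard integration formula on $HA^+_{P,\theta}\Omega$, cf.\ the references in \S2 for symmetric spaces). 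Writing $g = ha\gamma$, on the $H$-variable I would use $\Theta(Hha\gamma) = \Theta(Ha\gamma)$ (since $\Theta$ is a function on $H\bs G$) and feed the decay in $h$ to $\Xi(ha)(1+\sigma(ha))^{-r}$: by Lemma~\ref{lem. norm} $1+\sigma(ha)\asymp 1+\sigma(h)+\sigma(a)$, so the integral over $h$ splits off, and choosing $r = N$ from condition $(\star)$ (available because $\FX$ is very strongly discrete), Lemma~\ref{lem. asymp of integrals} gives $\int_H \Xi(ha)(1+\sigma(ha))^{-d}\,\d h \prec \Xi(a)$ for suitable $d$, with any remaining powers of $(1+\sigma(h))$ still integrable against $\Xi(h)$ after enlarging $N$.

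The remaining integral is then over $a\in A^+_{P,\theta}$ of a product of: $\Xi(a)$ (bounded above by $\delta_P^{1/2}(a)(1+\sigma(a))^{\text{const}}$ via \eqref{equ. harish fcn 1}, since on $A^+_{P,\theta}$ the modular character $\delta_P$ plays the role of $\delta_0$); $\Theta(Ha\gamma) \prec \delta_P^{1/2}(a)\N_{d'}(Ha)$ from \eqref{equ. harish-chandra fcn on sym space}; the volume factor $\delta_P^{-1}(a)$; $\N_{-d}(Ha)$ with $d$ as large as we like; and $(1+\sigma(a))^{-r}$ with $r$ as large as we like. The powers of $\delta_P$ cancel: $\delta_P^{1/2}\cdot\delta_P^{1/2}\cdot\delta_P^{-1} = 1$. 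What is left is $\sum_{a\in A^+_{P,\theta}/A^1} (1+\abs{H_M(a)})^{\text{const}}\,(1+\abs{H_M(a)})^{-d-r}$, using \eqref{equ. norm functions on symmetric space 1} to convert $\N$ and \eqref{equ. schwartz-harish-chandra} to convert $\sigma$, which converges once $d+r$ exceeds the rank of $A^+_{P,\theta}$ plus the accumulated constant. Summing over the finitely many $\FP\in\sP$ finishes the proof.

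The main obstacle I anticipate is the bookkeeping on the $a$-variable: one must simultaneously control $\Xi(a)$, $\Theta(Ha)$, the volume Jacobian, and the error powers of $1+\sigma$ with the correct exponents so that the $\delta_P$-powers genuinely cancel and what remains is a convergent multi-index geometric-type series. A secondary subtlety is justifying the splitting of the $h$-integral from the $a$-integral — i.e.\ applying Lemma~\ref{lem. asymp of integrals} uniformly in $a\in A^+_{P,\theta}$ — and absorbing the compact factor $\gamma\in\Omega$ into $\asymp$-estimates; both are handled by the cited properties in \S2 but need to be invoked carefully. Once the exponents are tracked, the convergence is immediate because every power of $1+\sigma$ can be made as negative as desired.
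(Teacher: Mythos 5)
Your proposal follows essentially the same route as the paper: reduce to $\int_{H\bs G}\abs{\varphi(g)}\int_H\abs{\phi(hg)}\,\d h\,\d g$, bound the inner $H$-integral by $\Xi(a)$ via Lemma~\ref{lem. asymp of integrals} (which is where the very strongly discrete hypothesis enters), use $\varphi\in\sC(H\bs G)$ together with the relative Cartan decomposition \eqref{equ. relative cartan decomposition}, and let the $\delta_P^{1/2}\cdot\delta_P^{1/2}\cdot\delta_P^{-1}$ cancellation leave a polynomially decaying sum over $A^+_{P,\theta}$ (the paper defers this last bookkeeping to \cite[Lemma 2.1]{dh}, which you spell out explicitly). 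The argument is correct; only trivial slips remain, e.g.\ citing \eqref{equ. schwartz-harish-chandra} instead of the estimate $1+\sigma(m)\asymp 1+\abs{H_0(m)}$ when converting $\sigma$ to $\abs{H_M(a)}$.
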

\begin{proof}
We have to show that the integral $$\int_{H\bs
G}\abs{\varphi(g)}\int_H\abs{\phi(hg)}\ \d h\ \d g$$ is absolutely
convergent. Consider the relative Cartan decomposition (\ref{equ.
relative cartan decomposition}). Let $\FP\in\sP$ and
$\FA=\FA_{\FP,\theta}$. Choose a maximal split torus $\FA_0$ of
$\FG$ and a minimal parabolic subgroup $\FP_0$ of $\FG$ such that
$\FA\subset\FA_0\subset\FP_0\subset\FP$. It suffices to show that
$$\int_{H\bs HA^+\Omega}\abs{\varphi(a)}\int_H\abs{\phi(ha)}\ \d h\ \d a\ \d
k$$ is absolutely convergent. Since $\pi$ is discrete series, it is
also relatively discrete series. Therefore the generalized matrix
coefficient $\varphi$ belongs to $\sC(H\bs G)$. Thus, for any
$d\in\BN$, we have
$$\abs{\varphi(a)}\prec\Theta(a)\N_{-d}(a).$$

Since $\FH\bs\FG$ is very strongly discrete and $\pi$ is discrete
series, by Lemma \ref{lem. asymp of integrals}, for the inner
integral we have
$$\int_H\abs{\phi(ha)}\ \d h\prec\Xi(a).$$
Hence it suffices to show that, for some $d\in\BN$ big enough, the integral
$$\int_{H\bs HA^+\Omega}\Theta(a)\Xi(a)\N_{-d}(a)\ \d a\ \d\gamma$$ is convergent.
Note that $A^+\subset A_0^+$. According to (\ref{equ. harish fcn 1})
and (\ref{equ. harish-chandra fcn on sym space}), there exists
$d_1,d_2\in\BN$ such that
$$\Theta(a)\prec\delta_P^{\frac{1}{2}}(a)\N_{d_1}(a),\quad
\Xi(a)\prec\delta^{\frac{1}{2}}_{P_0}(a)(1+\sigma(a))^{d_2}
=\delta^{\frac{1}{2}}_{P}(a)(1+\sigma(a))^{d_2}.$$ The rest of the
proof is the same as that of \cite[Lemma 2.1]{dh}.
\end{proof}

Let $\pi$ be a discrete series representation of $G$. Now we define
an hermitian inner product $(\cdot,\cdot)$ on $\Hom_H(\pi,\BC)$.
Denote by $\bar{\pi}$ the complex conjugate of $\pi$. For
$\ell_1,\ell_2\in\Hom_H(\pi,\BC)$ and $v,v'\in V_\pi$, the following
integral is well defined:
$$\pair{v,v'}_{\ell_1,\ell_2}:=\int_{H\bs G}\varphi_{\ell_1,v}(g)
\overline{\varphi_{\ell_2,v'}(g)}\ \d g,$$ since the generalized
matrix coefficients are square-integrable over $H\bs G$. Then
$$v\otimes\bar{v'}\mapsto\pair{v,v'}_{\ell_1,\ell_2}$$ defines a
morphism in $\Hom_G(\pi\otimes\bar{\pi},\BC)$. Since
$\dim\Hom_G(\pi\otimes\bar{\pi},\BC)=1$, there exists
$d_{\ell_1,\ell_2}\in\BC$ such that
$$\pair{v,v'}_{\ell_1,\ell_2}=d_{\ell_1,\ell_2}\pair{v,v'}$$ for any
$v,v'\in V_\pi$. Define
\begin{equation}\label{equ. inner product}
(\ell_1,\ell_2)=d_{\ell_1,\ell_2}.\end{equation} It is obvious that
$(\cdot,\cdot)$ is an hermitian inner product on $\Hom_H(\pi,\BC)$.

\begin{proof}[Proof of Theorem \ref{thm. exhaust}]
Let $\sH(\pi)^\bot$ be the orthogonal complement of $\sH(\pi)$ in
$\Hom_H(\pi,\BC)$ with respect to the inner product $(\cdot,\cdot)$
defined as (\ref{equ. inner product}). By \cite[Theorem 4.5]{de} we
have
$$\dim\Hom_H(\pi,\BC)<\infty$$ for any irreducible admissible
representation $\pi$. Hence, to show $\sH(\pi)=\Hom_H(\pi,\BC)$, it
suffices to show that $\sH(\pi)^\bot$ is zero.

Suppose that $\sH(\pi)^\bot$ is nonzero and choose a nonzero element
$\ell$ of $\sH(\pi)^\bot$. We will show that there exists a vector
$v_0$ in $V_\pi$ so that
$$(\ell,\sL_{v_0})\neq0,$$ which is a contradiction.
For $u,v\in V_\pi$, set $\phi(g)=\pair{\pi(g)v,u}$ for the matrix
coefficient associated to $u,v$. For $v_0\in V_\pi$, set
$$I(v_0,\phi)=\int_G\phi(g)\overline{\varphi_{\ell,v_0}(g)}\ \d g,$$
which is absolutely convergent by Lemma \ref{lem. main lemma}.
Therefore
$$I(v_0,\phi)=\int_{H\bs G}\varphi_{\sL_u,v}(g)
\overline{\varphi_{\ell,v_0}(g)}\ \d g=\pair{v,v_0}_{\sL_u,\ell}.$$

On the other hand, $I(v_0,\phi)$ can be rewritten as
$$\lim_{n\ra\infty}\int_{\Omega_n}\phi(g)\overline{\ell(\pi(g)v_0)}\ \d
g,$$ where $\Omega_n$ is an increasing family of open compact
subsets of $G$ and whose union is $G$. The integration over
$\Omega_n$ is actually a finite sum. Thus $\ell$ can be moved out
and we get
$$\begin{aligned}
\int_{\Omega_n}\phi(g)\overline{\ell(\pi(g)v_0)}\ \d g
&=\overline{\ell\left(\int_{\Omega_n}\bar{\phi}(g)\pi(g)v_0\ \d g\right)}\\
&=\overline{\ell(\pi(\bar{\phi}\cdot{\bf1}_{\Omega_n})v_0)},
\end{aligned}$$
where ${\bf1}_{\Omega_n}$ is the characteristic function of
$\Omega_n$. Therefore, passage to $n\ra\infty$, we obtain
$$\begin{aligned}
&\lim_{n\ra\infty}\int_{\Omega_n}\phi(g)\overline{\ell(\pi(g)v_0)}\
\d g\\
=&\lim_{n\ra\infty}\overline{\ell(\pi(\bar{\phi}\cdot{\bf1}_{\Omega_n})v_0)}\\
=&\overline{\ell(\pi(\bar{\phi})v_0)}.
\end{aligned}$$ For the definition of $\pi(\phi)v_0$, we refer to
\cite[\S III.7]{wa1}.

Now we choose some specific $v_0\in V_\pi$ so that
$\ell(v_0)\neq0$ and set $\phi_0(g)=\pair{v_0,\pi(g)v_0}$. Then, by
Schur orthogonality relations, $\pi(\phi_0)v_0=\lambda v_0$ for some
nonzero $\lambda\in\BC$. Thus
$$\pair{v_0,v_0}_{\sL_{v_0},\ell}
=\overline{\ell\left(\pi(\phi_0)v_0\right)}=\overline{\lambda\ell(v_0)}\neq0.$$
Therefore, $(\sL_{v_0},\ell)\neq0$, which completes the proof.
\end{proof}

\s{\small Chong Zhang\\
School of Mathematical Sciences, Beijing Normal University,\\
Beijing 100875, P. R. China.\\
{\em E-mail address}: \texttt{zhangchong@bnu.edu.cn}}

\end{document}